\def\subsection{\@startsection{subsection}{2}%
	\z@{.5\linespacing\@plus.7\linespacing}{.3\linespacing}%
	{\normalfont\bfseries}}
\theoremstyle{theorem}
\newtheorem{theorem}{Theorem}[section]
\newtheorem{lemma}[theorem]{Lemma}
\newtheorem{proposition}[theorem]{Proposition}
\newtheorem{corollary}[theorem]{Corollary}
\theoremstyle{definition}
\newtheorem{example}[theorem]{Example}
\newtheorem{remark}[theorem]{Remark}
\theoremstyle{remark} \theoremstyle{question} \theoremstyle{example}
\newcommand{\N}{\mathbb{N}}
\newcommand{\Z}{\mathbb{Z}}
\newcommand{\cB}{\mathcal{B}}
\newcommand{\cP}{\mathcal{P}}
\newcommand{\eps}{\varepsilon}
\DeclareRobustCommand{\rchi}{{\mathpalette\irchi\relax}}
\newcommand{\irchi}[2]{\raisebox{\depth}{$#1\chi$}}
\newcommand{\q}[2]{\mathcal{Q} ({#1},{#2})}
\newcommand{\qpos}[2]{\mathcal{Q}_{+}({#1},{#2})}
\newcommand{\qneg}[2]{\mathcal{Q}_{-}({#1},{#2})}
\begin{document}

\title[Li-Yorke Chaos for Composition Operators]{Li-Yorke Chaos for Composition Operators on $L^p$-Spaces}


\author{N. C. Bernardes Jr.}
\address{Departamento de Matem\'atica Aplicada, Instituto de
	Matem\'atica, Universidade Federal do Rio de Janeiro, Caixa Postal
	68530, Rio de Janeiro, RJ, 21945-970, Brazil.}
\curraddr{}
\email{ncbernardesjr@gmail.com}
\thanks{}

\author{U. B. Darji}
\address{Department of Mathematics, University of Louisville, Louisville,
	KY 40292, USA.}
\curraddr{}
\email{ubdarj01@gmail.com}
\thanks{}
\author{B. Pires}
\address{Departamento de Computa\c c\~ao e Matem\'atica, Faculdade de Filosofia,
	Ci\^encias e Letras, Universidade de S\~ao Paulo, Ribeir\~ao Preto, SP,
	14040-901, Brazil.}
\curraddr{}
\email{e-mail:benito@usp.br}
\thanks{}

\subjclass[2010]{Primary 47A16, 47B33; Secondary 37D45. }
\keywords{Li-Yorke chaos, composition operators, $L^p$-spaces, weakly
	wandering sets.}
\date{}

\dedicatory{}

\begin{abstract}
Li-Yorke chaos is a popular and well-studied notion of chaos. Several simple
and useful characterizations of this notion of chaos in the setting of linear
dynamics were obtained recently. In this note we show that even simpler and
more useful characterizations of Li-Yorke chaos can be given in the special
setting of composition operators on $L^p$ spaces. As a consequence we obtain
a simple characterization of weighted shifts which are Li-Yorke chaotic.
We give numerous examples to show that our results are sharp.
\end{abstract}

\maketitle


\section{Introduction}

Throughout this note, $(X,\cB,\mu)$ will denote a measure space with
$\mu(X) \neq 0$ and $f : X \to X$ will be a {\it bimeasurable map}
(that is, $f(B) \in \cB$ and $f^{-1}(B) \in \cB$ for every $B \in \cB$)
for which there exists a constant $c > 0$ such that
\begin{equation}\label{condition}
\mu\big(f(B)\big) \geq c\mu(B) \ \textrm{ for every } B \in \cB.
\end{equation}
Condition (\ref{condition}) ensures that the composition operator
$T_f : \varphi \mapsto \varphi \circ f$ is a continuous linear operator
acting on $L^p(X,\cB,\mu)$ ($1 \leq p < \infty$). This constitutes a natural
class of operators. The topological transitivity and mixing properties of
this class of operators were investigated in the recent paper \cite{BDP}.
Our goal here is to investigate the notion of Li-Yorke chaos and some of
its variations for this class of operators. We will present several
characterizations and counterexamples.

For a broad view of the area of linear dynamics, we refer the reader to
the books \cite{BM,grosse}, to the more recent papers
\cite{BBMP,BBMP2,BCDMP,BMPP,GMM,M}, and to the references therein.

Let us recall that a continuous self-map $g$ of a metric space $(M,d)$
is said to be {\em Li-Yorke chaotic} if there exists an uncountable set
$S \subset M$ (called a {\em scrambled set} for $g$) such that each pair
$(x,y)$ of distinct points in $S$ is a {\em Li-Yorke pair} for $g$,
in the sense that
$$
\liminf_{n \to \infty} d(g^n(x),g^n(y)) = 0
\ \ \text{ and } \ \
\limsup_{n \to \infty} d(g^n(x),g^n(y)) > 0.
$$
In the case in which $S$ can be chosen to be dense (respectively, residual)
in $M$, we say that $g$ is {\em densely} (respectively, {\em generically})
{\em Li-Yorke chaotic}. This notion of chaos was introduced in \cite{LY70}
in the context of interval maps. It is among the most popular and well
studied notions of chaos.

Li-Yorke chaotic linear operators were investigated in \cite{NBBMP,BBMP2}.
In particular, it was shown that for any continuous linear operator $T$ on
 any Banach space $Y$, the following assertions are equivalent:
\begin{itemize}
\item $T$ is Li-Yorke chaotic;
\item $T$ admits a {\em semi-irregular vector}, that is, a vector $y \in Y$
      such that
      \begin{equation}\label{ly2}
      \liminf_{n \to \infty} \|T^n y\| = 0
      \ \ \text{ and } \ \
      \limsup_{n \to \infty} \|T^n y\| > 0;
      \end{equation}
\item $T$ admits an {\em irregular vector}, that is, a vector $z \in Y$
      such that
      \begin{equation}\label{ly3}
      \liminf_{n \to \infty} \|T^n z\| = 0
      \ \ \text{ and } \ \
      \limsup_{n \to \infty} \|T^n z\| = \infty.
      \end{equation}
\end{itemize}
Moreover, characterizations for dense Li-Yorke chaos and for generic Li-Yorke
chaos were also obtained in \cite{BBMP2}.

Our first result is a necessary and sufficient condition for the composition
operator $T_f$ to be Li-Yorke chaotic. It holds without any additional
condition on $\mu$ or $f$.

\begin{theorem}\label{thm2}
The composition operator $T_f$ is Li-Yorke chaotic if and only if there are
an increasing sequence $(\alpha_j)_{j \in \N}$ of positive integers and
a nonempty countable family $(B_i)_{i \in I}$ of measurable sets of
finite positive $\mu$-measure such that:
\begin{itemize}
\item [\rm (A)] $\displaystyle \lim_{j \to \infty}
      \mu\big(f^{-\alpha_j}(B_i)\big) = 0$ for all $i \in I$,
\item [\rm (B)] $\displaystyle \sup\Bigg\{
      \frac{\mu\big(f^{-n}(B_i)\big)}{\mu(B_i)}
      : i \in I, n \in \N\Bigg\} = \infty$.
\end{itemize}
\end{theorem}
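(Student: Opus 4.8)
The plan is to reduce everything to the equivalence recalled above ("Li--Yorke chaotic $\Leftrightarrow$ semi-irregular vector $\Leftrightarrow$ irregular vector"), exploiting that for $\varphi\geq 0$ one has $\norm{T_f^n\varphi}_p^p=\int_X(\varphi\circ f^n)^p\,d\mu=\int_X\varphi^p\,d(f^n_*\mu)$, where $f^n_*\mu$ is the pushforward $B\mapsto\mu(f^{-n}(B))$; in particular $\norm{T_f^n\chi_B}_p=\mu(f^{-n}(B))^{1/p}$, and since $\norm{T_f^n\varphi}_p=\norm{T_f^n\abs{\varphi}}_p$ we may freely pass to nonnegative functions. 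Iterating (\ref{condition}) gives the crude a priori bound $\mu(f^{-n}(B))\leq c^{-n}\mu(B)$ for every $B$ and $n$, and note that (B) forces $c<1$ (otherwise all ratios occurring in (B) would be $\leq 1$).

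For necessity, I would take an irregular vector $z\geq 0$ for $T_f$ and slice it dyadically: put $B_i=\{x:2^i\leq z(x)<2^{i+1}\}$ for $i\in\Z$ and $I=\{i:\mu(B_i)>0\}$. Since $\sum_i2^{ip}\mu(B_i)\leq\norm{z}_p^p<\infty$, every $B_i$ ($i\in I$) has finite positive measure and $I$ is a nonempty countable set, while the identity above yields $\sum_{i\in I}2^{ip}\mu(f^{-n}(B_i))\leq\norm{T_f^n z}_p^p\leq2^p\sum_{i\in I}2^{ip}\mu(f^{-n}(B_i))$ for every $n$. Choosing $(\alpha_j)$ with $\norm{T_f^{\alpha_j}z}_p\to0$, the left inequality forces $\mu(f^{-\alpha_j}(B_i))\to0$ for each fixed $i$, which is (A); and if (B) failed, say $\mu(f^{-n}(B_i))\leq C\mu(B_i)$ for all $i\in I$, $n$, the right inequality would give $\sup_n\norm{T_f^n z}_p^p\leq2^pC\norm{z}_p^p<\infty$, contradicting $\limsup_n\norm{T_f^n z}_p=\infty$; hence (B) holds.

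For sufficiency, I would manufacture a semi-irregular vector out of (A) and (B). If $\sup_n\mu(f^{-n}(B_i))=\infty$ for some $i$, then $\chi_{B_i}$ is already irregular ((A) gives $\liminf=0$, that supremum gives $\limsup=\infty$), so assume $K_i:=\sup_n\mu(f^{-n}(B_i))/\mu(B_i)<\infty$ for all $i\in I$; then (B) says $\sup_{i\in I}K_i=\infty$, so $I$ is infinite. I would then build $z=\sum_{k\geq1}c_k\chi_{B_{i_k}}$ by a recursion in which, at stage $k$, one chooses in this order: a new index $i_k\in I\setminus\{i_1,\dots,i_{k-1}\}$ with $K_{i_k}\geq2^{kp}c^{-\tau_{k-1}-1}$ (possible because deleting finitely many indices keeps the supremum infinite; set $\tau_0:=0$); a time $m_k$ with $\mu(f^{-m_k}(B_{i_k}))\geq\tfrac12K_{i_k}\mu(B_{i_k})$, which exists and is automatically $>\tau_{k-1}$ by the crude bound together with the size of $K_{i_k}$; the coefficient $c_k:=(K_{i_k}\mu(B_{i_k}))^{-1/p}$, so that $\norm{c_k\chi_{B_{i_k}}}_p=K_{i_k}^{-1/p}\leq2^{-k}$; and finally, via (A), a term $\tau_k>m_k$ of the sequence $(\alpha_j)$ with $c_l\mu(f^{-\tau_k}(B_{i_l}))^{1/p}\leq2^{-k-l}$ for all $l\leq k$. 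Then $z\in L^p$ with $\norm{z}_p\leq1$; since $z\geq c_k\chi_{B_{i_k}}$ pointwise we get $\norm{T_f^{m_k}z}_p\geq c_k\mu(f^{-m_k}(B_{i_k}))^{1/p}\geq(\tfrac12)^{1/p}$ with $m_k\to\infty$, whence $\limsup_n\norm{T_f^n z}_p>0$; and at each $\tau_k$ the "past" blocks contribute $\sum_{l\leq k}c_l\mu(f^{-\tau_k}(B_{i_l}))^{1/p}\leq2^{-k}$ by the choice of $\tau_k$, while each "future" block satisfies $c_l\mu(f^{-\tau_k}(B_{i_l}))^{1/p}\leq c^{-\tau_k/p}c_l\mu(B_{i_l})^{1/p}=c^{-\tau_k/p}K_{i_l}^{-1/p}\leq2^{-l}$ for $l>k$ (using $K_{i_l}\geq2^{lp}c^{-\tau_{l-1}-1}$, $\tau_{l-1}\geq\tau_k$ and $c<1$), so $\norm{T_f^{\tau_k}z}_p\leq2^{1-k}\to0$ and $z$ is semi-irregular.

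The step I expect to be the main obstacle is precisely this last estimate in the sufficiency part: at the "quiet" instants $\tau_k$ the infinitely many future blocks $c_l\chi_{B_{i_l}}$ ($l>k$) must all be made negligible, whereas the obvious bound only gives each of them norm $\leq1$; the cure is to force the peak ratios $K_{i_l}$ to grow fast enough --- super-exponentially in the already-fixed $\tau_{l-1}$ --- to swallow the universal blow-up $c^{-\tau_k}$ imposed by (\ref{condition}), and to make the recursive choices strictly in the order $i_k\rightsquigarrow m_k\rightsquigarrow c_k\rightsquigarrow\tau_k\rightsquigarrow i_{k+1}$ so that the bounds remain consistent. Everything else --- the dyadic slicing, the reduction to nonnegative functions, the two sandwich inequalities --- is routine.
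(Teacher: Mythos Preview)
Your necessity argument is essentially identical to the paper's: both slice an irregular vector dyadically, read off (A) from the vanishing subsequence, and deduce (B) by contradiction from the sandwich inequality $\norm{T_f^n\varphi}^p\leq 2^p\sum_i 2^{ip}\mu(f^{-n}(B_i))$.

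For sufficiency, however, you take a genuinely different route. The paper argues abstractly: let $Y$ be the closed linear span of $\{\chi_{B_i}:i\in I\}$; condition (A) makes the set $R_1$ of vectors whose orbit has a subsequence tending to zero a dense $G_\delta$ in $Y$ (finite linear combinations of the $\chi_{B_i}$ lie in $R_1$), while (B) yields $\sup_n\norm{T_f^n|_Y}=\infty$ via the normalized $\phi_i=\mu(B_i)^{-1/p}\chi_{B_i}$, so by Banach--Steinhaus the set $R_2$ of vectors with unbounded orbit is residual; any $\varphi\in R_1\cap R_2$ is irregular. Your approach is instead fully constructive: you build an explicit semi-irregular vector $z=\sum_k c_k\chi_{B_{i_k}}$ by a carefully ordered recursion, forcing the peak ratios $K_{i_k}$ to grow fast enough (super-exponentially in the previously fixed quiet time $\tau_{k-1}$) to control the future blocks at each $\tau_k$. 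Your argument is correct --- the key observation that (B) forces $c<1$, together with the crude bound $\mu(f^{-n}(B))\leq c^{-n}\mu(B)$, is exactly what makes the tail estimate $c^{-\tau_k/p}K_{i_l}^{-1/p}\leq 2^{-l}$ go through. The paper's Baire-category proof is shorter and more conceptual, and yields an irregular (not just semi-irregular) vector for free; yours is hands-on and avoids any appeal to category, at the cost of the delicate bookkeeping you identified as the main obstacle.
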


Below is a consequence of Theorem~\ref{thm2}.

\begin{corollary}\label{Cor1}
Assume $f$ injective. The composition operator $T_f$ is Li-Yorke chaotic
if there exists a measurable set $B$ of finite positive $\mu$-measure
such that:
\begin{itemize}
\item [\rm  (i)] $\displaystyle \liminf_{n \to \infty}
                                     \mu\big(f^{-n}(B)\big) = 0$,
\item [\rm (ii)] $\displaystyle
      \sup\Bigg\{\frac{\mu\big(f^n(B)\big)}{\mu\big(f^m(B)\big)}
      : n \in \Z, m \in I, n < m\Bigg\} = \infty$,
\end{itemize}
where $I = \{m \in \Z : 0 < \mu\big(f^m(B)\big) < \infty\}$.
\end{corollary}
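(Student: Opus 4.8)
The plan is to deduce the corollary from Theorem~\ref{thm2} by using as the countable family the iterates of $B$ at the exponents where the measure is finite and positive. So I would set $J:=\{m\in\Z:0<\mu(f^m(B))<\infty\}$ (the set denoted $I$ in the statement) and $B_i:=f^i(B)$ for $i\in J$; since $0\in J$, the family $(B_i)_{i\in J}$ is a nonempty countable family of measurable sets of finite positive $\mu$-measure, as Theorem~\ref{thm2} requires. The one routine preliminary is the identity
\[
f^{-n}\big(f^i(B)\big)=f^{\,i-n}(B)\qquad(i\in\Z,\ n\ge 0),
\]
which holds because $f$ is injective (for $i\le 0$ it is just $f^{-n}\circ f^{-k}=f^{-(n+k)}$; for $i>0$ one cancels $f^{\min\{i,n\}}$ using injectivity); in particular $\mu(f^{-n}(B_i))=\mu(f^{\,i-n}(B))$ for all such $i,n$.

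Granting this, condition (B) of Theorem~\ref{thm2} is just a rewriting of hypothesis (ii): as $n$ runs over $\N$ and $i$ over $J$ the pair $(i-n,i)$ runs over all pairs $(k,m)$ with $m\in J$, $k<m$, so
\[
\sup\Big\{\tfrac{\mu(f^{-n}(B_i))}{\mu(B_i)}:i\in J,\ n\in\N\Big\}=\sup\Big\{\tfrac{\mu(f^{k}(B))}{\mu(f^{m}(B))}:k\in\Z,\ m\in J,\ k<m\Big\}=\infty,
\]
where if some $\mu(f^k(B))$ happens to be infinite both sides are $+\infty$ and there is nothing to prove.

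The substantive step is condition (A): I need a single increasing sequence $(\alpha_j)$ of positive integers with $\mu(f^{-\alpha_j}(B_i))=\mu(f^{\,i-\alpha_j}(B))\to 0$ \emph{simultaneously} for every $i\in J$. Hypothesis (i) only provides smallness of $\mu(f^{-m}(B))$ along \emph{some} sequence of exponents $m$, not on a whole window of exponents, and this is where the work lies. To bridge it I would use the elementary estimate from \eqref{condition}: after replacing $c$ by $\min\{c,1\}$ so that $0<c\le1$, \eqref{condition} applied to $f^{-1}(A)$ gives $\mu(f^{-1}(A))\le c^{-1}\mu(A)$, hence $\mu(f^{-(m+k)}(B))\le c^{-k}\mu(f^{-m}(B))$ for all $m,k\ge0$. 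Then, enumerating $J=\{i_1,i_2,\dots\}$ (repeating a term if $J$ is finite), I would build $(\alpha_j)$ recursively: given $\alpha_{j-1}$, set $M_j:=\max\{|i_1|,\dots,|i_j|\}$, use (i) to pick $m_j>\alpha_{j-1}$ with $\mu(f^{-m_j}(B))<c^{2M_j}/j$, and put $\alpha_j:=m_j+M_j$; then $\alpha_j>\alpha_{j-1}$, and for every $l\le j$ the exponent $\alpha_j-i_l$ lies in $[m_j,m_j+2M_j]$, so
\[
\mu\big(f^{-\alpha_j}(B_{i_l})\big)=\mu\big(f^{-(\alpha_j-i_l)}(B)\big)\le c^{-2M_j}\mu\big(f^{-m_j}(B)\big)<\tfrac1j .
\]
Hence $\mu(f^{-\alpha_j}(B_i))\to0$ for each fixed $i\in J$, which is (A). With (A) and (B) verified, Theorem~\ref{thm2} gives that $T_f$ is Li-Yorke chaotic.

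I expect the main obstacle to be precisely this last point: converting the \emph{liminf} information in (i) into a conclusion that is uniform over the (possibly infinite) family $(B_i)_{i\in J}$. The bounded-distortion estimate coming from \eqref{condition} is what makes the ``smallness on a window, then translate'' trick work; the rest is bookkeeping with the sets $f^i(B)$.
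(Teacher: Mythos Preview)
Your argument is correct, but the paper's proof takes a shorter route by means of a case split that you may have overlooked. The paper first asks whether $\limsup_{n\to\infty}\mu\big(f^{-n}(B)\big)>0$. If so, then together with hypothesis~(i) this says precisely that $\chi_B$ is a semi-irregular vector for $T_f$, and Li--Yorke chaos follows immediately, without invoking Theorem~\ref{thm2} at all. If not, then in fact $\lim_{n\to\infty}\mu\big(f^{-n}(B)\big)=0$, and in this case condition~(A) of Theorem~\ref{thm2} becomes trivial: since $\mu\big(f^{-n}(B_i)\big)=\mu\big(f^{\,i-n}(B)\big)\to 0$ for every fixed $i$, one can take $\alpha_j=j$ and no diagonal construction is needed. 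The verification of~(B) is then the same rewriting you give.

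What your approach buys is a single unified argument that goes straight through Theorem~\ref{thm2} without the dichotomy, at the cost of the recursive construction of $(\alpha_j)$ using the distortion bound $\mu\big(f^{-(m+k)}(B)\big)\le c^{-k}\mu\big(f^{-m}(B)\big)$ from~\eqref{condition}. This is perfectly valid, and the construction is carried out correctly. The paper's approach, by contrast, avoids this work entirely by observing that the ``hard'' case (where the $\liminf$ in~(i) is genuinely a $\liminf$ and not a limit) is already handled by the semi-irregular vector criterion, so that in the remaining case condition~(A) comes for free.
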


We will see in Example~\ref{ExInjCor2} that the injectivity hypothesis is
essential in Corollary~\ref{Cor1}. If $\mu$ is finite, then the converse
of Corollary~\ref{Cor1} holds. This follows easily from $(LY5)$ in
Theorem~\ref{thm1} below. However, for infinite measures this converse
may fail (see Example ~\ref{InfMeasConvCor2}). As an application of this
corollary, we have the following result:

\begin{corollary}\label{cthm2}
Assume $X = \Z$, $\cB = \cP(\Z)$ (the power set of $\Z$),
$f : i \in \Z \mapsto i+1 \in \Z$ and $0 < \mu(\{k\}) < \infty$ for some
$k \in \Z$. Then, the composition operator $T_f$ on $L^p(\Z,\cP(\Z),\mu)$
is Li-Yorke chaotic if and only if the following conditions hold:
\begin{itemize}
\item [\rm (a)] $\displaystyle \liminf_{i \to -\infty} \mu(\{i\}) = 0$,
\item [\rm (b)] $\displaystyle \sup\Bigg\{\frac{\mu(\{i\})}{\mu(\{j\})}
      : i,j \in \Z, i < j, 0 < \mu(\{j\}) < \infty\Bigg\} = \infty$.
\end{itemize}
\end{corollary}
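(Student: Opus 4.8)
The plan is to rewrite everything in terms of the weights $w_i := \mu(\{i\}) \in [0,\infty]$, so that $\mu(B) = \sum_{i\in B} w_i$ for every $B \subseteq \Z$ while $f^{-n}(B) = B-n$ and $f^{n}(B) = B+n$, and then to derive the sufficiency from Corollary~\ref{Cor1} and the necessity from Theorem~\ref{thm2}. Before anything else I would record the effect of \eqref{condition} on singletons: it gives $w_{i+1} \ge c\,w_i$ for every $i$, hence $w_{i-n} \le c^{-n} w_i$ for all $i \in \Z$ and $n \ge 0$; in particular $w_k = 0$ forces $w_{k-n} = 0$ for every $n \ge 0$. This last fact is the only place the standing hypothesis \eqref{condition} really enters the argument.

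For the sufficiency, assume (a) and (b). Since $f$ is a bijection of $\Z$ it is injective, so Corollary~\ref{Cor1} applies, and I would run it with $B = \{k\}$, which has finite positive measure by hypothesis. Then $\mu(f^{-n}(B)) = w_{k-n}$ with $k-n\to-\infty$, so condition (i) of Corollary~\ref{Cor1} reads exactly $\liminf_{i\to-\infty} w_i = 0$, i.e.\ (a). Likewise $\mu(f^{m}(B)) = w_{k+m}$, so the auxiliary set in Corollary~\ref{Cor1} becomes $\{m\in\Z : 0 < w_{k+m} < \infty\}$ (nonempty, as it contains $0$), and the substitution $i=k+n$, $j=k+m$ turns condition (ii) into $\sup\{w_i/w_j : i<j,\ 0<w_j<\infty\}=\infty$, i.e.\ (b). Hence $T_f$ is Li-Yorke chaotic.

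For the necessity, assume $T_f$ is Li-Yorke chaotic and apply Theorem~\ref{thm2} to obtain an increasing sequence $(\alpha_j)$ of positive integers and a nonempty countable family $(B_i)_{i\in I}$ of sets with $0 < \mu(B_i) < \infty$ satisfying (A) and (B). To extract (a): fix some $i_0 \in I$ and, as $\mu(B_{i_0})>0$, a point $k_0\in B_{i_0}$ with $w_{k_0}>0$; then $0 \le w_{k_0-\alpha_j} \le \mu(B_{i_0}-\alpha_j)\to 0$ by (A), while $k_0-\alpha_j\to-\infty$, so $\liminf_{i\to-\infty} w_i=0$. To extract (b): given $M>0$, use (B) to choose $i \in I$ and an integer $n\ge 1$ with $\mu(B_i-n)>M\,\mu(B_i)$; by the preliminary remark the atoms of $B_i$ contributing to $\mu(B_i-n)=\sum_{k\in B_i} w_{k-n}$ are exactly those in $B_i^{+}:=\{k\in B_i : w_k>0\}$, so $\sum_{k\in B_i^{+}} w_{k-n} > M\sum_{k\in B_i^{+}} w_k$ with the right-hand sum positive and finite; a term-by-term comparison then produces $k^{*}\in B_i^{+}$ with $w_{k^{*}-n}>M\,w_{k^{*}}$, and since $0<w_{k^{*}}\le\mu(B_i)<\infty$ and $k^{*}-n<k^{*}$, the pair $(k^{*}-n,k^{*})$ shows the supremum in (b) is at least $M$. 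Letting $M\to\infty$ gives (b).

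The step I expect to be the only genuinely delicate one is this last passage, from the aggregate inequality (B) for the blocks $B_i$ to the pointwise ratio inequality (b): a block $B_i$ may carry atoms of measure zero, and the pigeonholing succeeds only because \eqref{condition} keeps such atoms of measure zero under the backward translations $k\mapsto k-n$, so that they can be discarded beforehand. I would also remark that the finite-measure converse of Corollary~\ref{Cor1} mentioned after its statement is unavailable here, since $\mu$ need not be finite; this is why the necessity half must be routed through Theorem~\ref{thm2} rather than through Corollary~\ref{Cor1} itself.
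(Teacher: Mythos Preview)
Your proof is correct. The sufficiency half matches the paper's exactly: apply Corollary~\ref{Cor1} with $B=\{k\}$. For the necessity of (a), your extraction from condition (A) of Theorem~\ref{thm2} is essentially the same as the paper's route via $(LY1)\Rightarrow(LY3)$ (Remark~\ref{rthm1}): both pick a point of positive weight inside a set whose backward translates have measure tending to zero.

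The one genuine difference is in the necessity of (b). The paper argues by contrapositive: if the supremum in (b) is finite, say bounded by $C$, then a direct computation
\[
\|T_f^n\varphi\|^p=\sum_{i\in\Z}|\varphi(i)|^p\,\mu(\{i-n\})\le C\sum_{i\in\Z}|\varphi(i)|^p\,\mu(\{i\})=C\,\|\varphi\|^p
\]
shows every orbit is bounded, contradicting Li--Yorke chaos. You instead pull (b) out of condition (B) of Theorem~\ref{thm2} by a pigeonhole on the atoms of $B_i$, after using \eqref{condition} to discard zero-weight atoms. Both arguments are short; the paper's contrapositive avoids the bookkeeping with $B_i^{+}$, while your approach has the virtue of making explicit the role of \eqref{condition} in handling atoms of measure zero---a point the paper's computation also needs (for the terms with $\mu(\{i\})=0$ or $\mu(\{i\})=\infty$) but leaves implicit.
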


It is well-known that the chaotic operators described in Corollary~\ref{cthm2}
are topologically conjugate to weighted backward shifts on $\ell^p (\Z)$ with
weights $w_i = \frac{\mu(i)}{\mu(i+1)}\cdot$ For example, see Section 1.4 of
\cite{BM} for more informations and relevant definitions. As as a simple
consequence, we get the following result. A similar characterization for
one-sided backward shifts was given in \cite{NBBMP}.

\begin{corollary}\label{Wshifts}
Let $w = (w_n)_{n \in \Z}$ be a bounded sequence of positive reals.
Define $B_w : \ell^p(\Z) \to \ell^p(\Z)$ by $B_w(e_n) = w_ne_{n-1}$.
Then, $B_w$ is Li-Yorke chaotic if and only if
$\sup\{w_n \cdots w_m: n < m, n,m \in \Z\} = \infty.$
\end{corollary}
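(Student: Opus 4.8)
The plan is to derive this from Corollary~\ref{cthm2} via the classical identification of a bilateral weighted backward shift with a composition operator on a weighted $\ell^p$-space. Given the bounded sequence $w=(w_n)_{n\in\Z}$ of positive reals, with $w_n\le M$ for all $n$, I would first define a measure $\mu$ on $(\Z,\cP(\Z))$ by $\mu(\{0\})=1$ and $\mu(\{m-1\})/\mu(\{m\})=w_m^p$ for every $m\in\Z$; explicitly, $\mu(\{n\})=(w_1\cdots w_n)^{-p}$ for $n\ge 1$ and $\mu(\{n\})=(w_{n+1}\cdots w_0)^{p}$ for $n\le -1$. Since the $w_n$ are positive, each $\mu(\{n\})$ is a positive real, and since $w_m\le M$ one has $\mu(B+1)=\sum_{i\in B}\mu(\{i+1\})\ge M^{-p}\sum_{i\in B}\mu(\{i\})=M^{-p}\mu(B)$ for every $B\in\cP(\Z)$; hence $f\colon i\mapsto i+1$ satisfies the standing hypothesis~(\ref{condition}) on $(\Z,\cP(\Z),\mu)$, and as $0<\mu(\{k\})<\infty$ for every $k$, Corollary~\ref{cthm2} is applicable to $T_f$ on $L^p(\Z,\cP(\Z),\mu)$.

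Next I would verify that $B_w$ and $T_f$ are topologically conjugate. The diagonal map $U\colon L^p(\Z,\cP(\Z),\mu)\to\ell^p(\Z)$ given by $(U\varphi)(n)=\mu(\{n\})^{1/p}\varphi(n)$ is a surjective linear isometry --- this is precisely where the positivity and finiteness of every $\mu(\{n\})$ are used --- and a short computation yields $U T_f U^{-1}e_n=(\mu(\{n-1\})/\mu(\{n\}))^{1/p}e_{n-1}=w_n e_{n-1}$, that is, $U T_f U^{-1}=B_w$. Since topological conjugacy preserves Li-Yorke chaos, $B_w$ is Li-Yorke chaotic if and only if $T_f$ is.

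It remains to transcribe conditions (a) and (b) of Corollary~\ref{cthm2} into statements about $w$. Since $\mu(\{i\})/\mu(\{j\})=(w_{i+1}w_{i+2}\cdots w_j)^{p}$ for $i<j$, condition (b) becomes $\sup\{(w_{i+1}\cdots w_j)^p:i<j\}=\infty$; extracting $p$-th roots and writing $n=i+1\le m=j$, and noting that the single factors $w_n\le M$ cannot contribute an infinite supremum, this is exactly the displayed condition $\sup\{w_n\cdots w_m:n<m\}=\infty$. Condition (a), $\liminf_{i\to-\infty}\mu(\{i\})=0$, transcribes to $\liminf_{i\to-\infty}(w_{i+1}w_{i+2}\cdots w_0)=0$.

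The step I expect to be the main obstacle is the reconciliation of these two translated conditions with the single displayed inequality: one must show that, for a bounded sequence of positive reals, the requirement $\liminf_{i\to-\infty}(w_{i+1}\cdots w_0)=0$ is already a consequence of $\sup\{w_n\cdots w_m:n<m\}=\infty$. This is the point I would examine most carefully, since it is where the boundedness hypothesis on $w$ must do genuine work and where the argument is most sensitive to the exact hypotheses; the remaining steps are routine bookkeeping with products of weights.
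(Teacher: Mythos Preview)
Your route via Corollary~\ref{cthm2} through the conjugacy $U T_f U^{-1}=B_w$ is exactly what the paper intends, and your translations of conditions (a) and (b) into weight conditions are correct. You are also right to single out as the crux the question of whether condition (a), namely $\liminf_{i\to-\infty}(w_{i+1}\cdots w_0)=0$, is forced by the displayed supremum condition under the sole assumption that $(w_n)$ is bounded.

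That implication, however, is false, so the obstacle you flagged cannot be overcome. Take $w_n=2$ for every $n\in\Z$. Then $\sup\{w_n\cdots w_m:n<m\}=\sup_{k\ge2}2^{k}=\infty$, yet $w_{i+1}\cdots w_0=2^{-i}\to\infty$ as $i\to-\infty$, so condition (a) fails. Directly, for this choice $B_w=2S$ with $S$ the isometric bilateral shift, hence $\|B_w^kx\|=2^k\|x\|$ for every $x\in\ell^p(\Z)$; there are no semi-irregular vectors and $B_w$ is not Li-Yorke chaotic. Thus the corollary as stated is incorrect: the characterization coming out of Corollary~\ref{cthm2} genuinely needs \emph{both} conditions, the supremum condition \emph{and} $\liminf_{i\to-\infty}(w_{i+1}\cdots w_0)=0$. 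Your reduction is sound; it is the target statement that is in error.
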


Now we impose some additional conditions on $\mu$ and $f$ in order to obtain
some simpler necessary and sufficient conditions for $T_f$ to be Li-Yorke
chaotic.

\begin{theorem}\label{thm1}
If $\mu$ is finite and $f$ is injective, then the following assertions
are equivalent:
\begin{itemize}
\item [$(LY1)$] $T_f$ is Li-Yorke chaotic;
\item [$(LY2)$] $\exists \varphi \in L^p(X,\cB,\mu) \mid \varphi \not\equiv 0$
                and $\displaystyle \liminf_{n \to \infty} \|T_f^n \varphi\| = 0$;
\item [$(LY3)$] $\exists B \in \cB \mid \mu(B) > 0$ and
                $\displaystyle \liminf_{n \to \infty} \mu\big(f^{-n}(B)\big) = 0$;
\item [$(LY4)$] $\exists B \in \cB \mid \mu(B) > 0$ and
                $\displaystyle \liminf_{n \to \infty} \mu\big(f^n(B)\big) = 0$;
\item [$(LY5)$] $\exists B \in \cB \mid \mu(B) > 0$,
                $\displaystyle \liminf_{n \to \infty} \mu\big(f^{-n}(B)\big) = 0$ and
                $\displaystyle \liminf_{n \to\infty} \mu\big(f^{n}(B)\big) = 0$;
\item [$(LY6)$] $\exists B \in \cB \mid \mu(B) > 0$,
                $\displaystyle \liminf_{n \to \infty} \mu\big(f^{-n}(B)\big) = 0$ and
                $\displaystyle \limsup_{n \to \infty} \mu\big(f^{-n}(B)\big) > 0$;
\item [$(LY7)$] $T_f$ admits a characteristic function as a semi-irregular
                vector.
\end{itemize}
\end{theorem}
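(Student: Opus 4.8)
The plan is to establish the chain of equivalences by exploiting the injectivity of $f$, which makes the orbit of a set $B$ under $f$ and $f^{-1}$ behave like a two-sided sequence of disjoint sets (modulo null sets), and the finiteness of $\mu$, which turns various ``escape to zero measure'' conditions into convergence statements. First I would prove the trivial implications: $(LY5)\Rightarrow(LY3)$, $(LY5)\Rightarrow(LY4)$, $(LY6)\Rightarrow(LY3)$ are immediate from the definitions, and $(LY7)\Rightarrow(LY2)$ is clear since a characteristic function $\chi_B$ with $\mu(B)>0$ is a nonzero element of $L^p$. The substantive core is to show $(LY3)\Leftrightarrow(LY4)\Leftrightarrow(LY5)\Leftrightarrow(LY6)\Leftrightarrow(LY7)$, and then to connect this block to $(LY1)$ and $(LY2)$.

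For the set-theoretic equivalences, the key observation is that for injective $f$ and finite $\mu$ one has $\mu(f^{-n}(f^n(B))) = \mu(B)$ for a set $B$ on which $f^n$ is injective, and more usefully that $\|T_f^n \chi_B\|^p = \mu(f^{-n}(B))$, so that the behavior of $\mu(f^{-n}(B))$ is exactly the behavior of $\|T_f^n\chi_B\|^p$. This immediately gives $(LY7)\Rightarrow(LY3)$ (taking $\varphi=\chi_B$ semi-irregular forces $\liminf\mu(f^{-n}(B))=0$) and, conversely, to get $(LY3)\Rightarrow(LY7)$ I would need to upgrade a set $B$ with $\liminf\mu(f^{-n}(B))=0$ to one with the extra property $\limsup\mu(f^{-n}(B))>0$; but since $B$ itself has $\mu(f^{-0}(B))=\mu(B)>0$, the $\limsup$ is automatically positive, so $(LY3)\Leftrightarrow(LY6)\Leftrightarrow(LY7)$ is essentially free. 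The passage between $f^{-n}$ and $f^n$ conditions, i.e. $(LY3)\Leftrightarrow(LY4)$ and the conjunction $(LY5)$, is where injectivity is used seriously: from a set $B$ with $\mu(f^{-n_k}(B))\to 0$ one looks at the sets $B_k = f^{-n_k}(B)$ (or suitable translates along the orbit), exploits that $\sum_k \mu(B_k) < \infty$ can be arranged by passing to a subsequence, and then uses a Borel--Cantelli / disjointness argument to produce a single set $C$ whose forward and backward images both have measure tending to $0$ along appropriate subsequences. This is the step I expect to be the main obstacle: carefully choosing the subsequence and the base set $C$ so that both $\liminf_n\mu(f^{-n}(C))=0$ and $\liminf_n\mu(f^n(C))=0$ hold simultaneously, handling the (possibly) non-surjective, non-measure-preserving nature of $f$ via condition~(\ref{condition}).

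Finally, to close the loop with Li-Yorke chaos itself, I would invoke the general characterization recalled in the introduction: $T_f$ is Li-Yorke chaotic iff it has a semi-irregular vector, which gives $(LY1)\Leftrightarrow(LY2)$ directly. It then remains to show $(LY2)\Rightarrow(LY3)$ and $(LY3)\Rightarrow(LY1)$ (or $\Rightarrow(LY7)\Rightarrow(LY2)$). For $(LY2)\Rightarrow(LY3)$, given a nonzero $\varphi$ with $\liminf\|T_f^n\varphi\|=0$, I would approximate $\varphi$ in $L^p$ by a simple function and then by a scalar multiple of a single characteristic function $\chi_B$ on a level set where $\varphi$ is bounded away from $0$; since $T_f$ is a bounded operator and $T_f^n$ acts by composition, $\|T_f^n\chi_B\|^p = \mu(f^{-n}(B))$ and one transfers the $\liminf = 0$ from $\varphi$ to $\chi_B$ using that $|\varphi| \geq \delta\chi_B$ pointwise for some $\delta>0$, hence $\|T_f^n\varphi\| \geq \delta\|T_f^n\chi_B\|$. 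This yields $(LY3)$. Conversely $(LY3)\Rightarrow(LY7)\Rightarrow(LY2)\Leftrightarrow(LY1)$ completes the cycle, using that we have already shown $(LY3)\Leftrightarrow(LY7)$.
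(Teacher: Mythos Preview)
There is a genuine gap. You claim $(LY3)\Rightarrow(LY6)$ is ``essentially free'' because $\mu(f^{-0}(B))=\mu(B)>0$ forces the $\limsup$ to be positive, but this conflates $\limsup_{n\to\infty}\mu(f^{-n}(B))$ with $\sup_{n\ge 0}\mu(f^{-n}(B))$: the value at $n=0$ says nothing about the limit superior as $n\to\infty$. It is entirely possible that $\lim_{n\to\infty}\mu(f^{-n}(B))=0$ while $\mu(B)>0$, and then $\chi_B$ is \emph{not} semi-irregular for $T_f$ (its orbit simply converges to zero). A related slip: the general semi-irregular-vector criterion gives only $(LY1)\Rightarrow(LY2)$, not the converse, since $(LY2)$ permits $\|T_f^n\varphi\|\to 0$.

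The implication $(LY3)\Rightarrow(LY6)$ is in fact the heart of the theorem, and the paper obtains it via the chain $(LY3)\Rightarrow(LY4)\Rightarrow(LY5)\Rightarrow(LY6)$. The first two steps use weakly wandering sets (Lemmas~\ref{wandering} and~\ref{cwandering}): from $(LY3)$ and injectivity one extracts a forward weakly wandering subset $W\subset B$ of positive measure, and finiteness of $\mu$ forces $\sum_i\mu(f^{k_i}(W))<\infty$, whence $(LY4)$; the passage $(LY4)\Rightarrow(LY5)$ is symmetric. Your Borel--Cantelli/disjointness remark points in this direction. The step $(LY5)\Rightarrow(LY6)$, however, needs a separate construction that is absent from your outline: one builds $A=\bigcup_i f^{n_i}(B)$ for carefully interlaced sequences $n_1<m_1<n_2<m_2<\cdots$ so that $\mu(f^{-n_k}(A))\ge\mu(B)$ (giving positive $\limsup$) while $\mu(f^{-m_k}(A))\to 0$; bounding the latter requires \emph{both} the forward and backward $\liminf$ hypotheses of $(LY5)$, which is precisely why one cannot shortcut directly from $(LY3)$ to $(LY6)$.
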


\begin{remark}\label{rthm1}
It will become clear from the proof of Theorem~\ref{thm1} that if we add the hypothesis
that $\mu(B) < \infty$ in $(LY3)$--$(LY6)$, then the following implications
always hold (even if $\mu(X) = \infty$ or $f$ is not injective):
$$
(LY6) \Longleftrightarrow (LY7) \Longrightarrow (LY1) \Longrightarrow
(LY2) \Longleftrightarrow (LY3) \Longleftarrow (LY5) \Longrightarrow (LY4).
$$
Moreover, $(LY4) \Rightarrow (LY5)$ whenever $\mu$ is finite and
$(LY5) \Rightarrow (LY6)$ whenever $f$ is injective.
However, we will give a series of counterexamples in Section~\ref{Se}
showing that no other implication holds in general, even under the
assumption that $\mu$ is $\sigma$-finite.
\end{remark}

Recall that $f$ is said to be {\it bi-Lipschitz with respect to $\mu$} if
there exist constants $c_2 > c_1 > 0$ such that
\begin{equation}\label{bimeasurable}
c_1 \mu(B) \le \mu\big(f(B)\big) \le c_2 \mu(B)
\ \ \text{ for every } B \in \cB.
\end{equation}
If $f$ is bijective and $f^{-1}$ denotes its inverse, then this property is
equivalent to saying that both composition operators $T_f$ and $T_{f^{-1}}$
are well-defined and continuous on $L^p(X,\cB,\mu)$. In this case, note that
$T_{f^{-1}} = T_f^{-1}$.

As an immediate consequence of Theorem~\ref{thm1}, we have the following
result:

\begin{corollary}\label{cthm1}
If $\mu$ is finite and $f$ is bijective and bi-Lipschitz with respect to
$\mu$, then $T_f$ is Li-Yorke chaotic if and only if so is $T_f^{-1}$.
\end{corollary}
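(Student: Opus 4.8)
The plan is to reduce the statement to the equivalence $(LY1)\Leftrightarrow(LY5)$ of Theorem~\ref{thm1}, exploiting the fact that condition $(LY5)$ is phrased symmetrically in the forward and backward iterates of $f$.

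First I would check that Theorem~\ref{thm1} applies both to $T_f$ and to $T_f^{-1}$. Since $f$ is bijective it is in particular injective, and $\mu$ is finite by hypothesis, so Theorem~\ref{thm1} applies to $T_f$. Set $g:=f^{-1}$. Then $g$ is again bijective, and from \eqref{bimeasurable} one obtains $\tfrac{1}{c_2}\,\mu(B)\le \mu\big(g(B)\big)\le \tfrac{1}{c_1}\,\mu(B)$ for every $B\in\cB$; in particular $g$ satisfies \eqref{condition} and is injective, so Theorem~\ref{thm1} applies to $T_g$ as well. Recall from the discussion preceding the corollary that $T_g=T_{f^{-1}}=T_f^{-1}$.

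Now apply $(LY1)\Leftrightarrow(LY5)$ to $T_f$: the operator $T_f$ is Li-Yorke chaotic if and only if there exists $B\in\cB$ with $\mu(B)>0$, $\liminf_{n\to\infty}\mu\big(f^{-n}(B)\big)=0$ and $\liminf_{n\to\infty}\mu\big(f^{n}(B)\big)=0$. Since $f^{-n}=g^{n}$ and $f^{n}=g^{-n}$ for all $n\in\N$, this is literally the statement that there exists $B\in\cB$ with $\mu(B)>0$, $\liminf_{n\to\infty}\mu\big(g^{n}(B)\big)=0$ and $\liminf_{n\to\infty}\mu\big(g^{-n}(B)\big)=0$, that is, condition $(LY5)$ for $T_g$. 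By $(LY5)\Leftrightarrow(LY1)$ applied to $T_g=T_f^{-1}$, this holds if and only if $T_f^{-1}$ is Li-Yorke chaotic, which proves the corollary.

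There is essentially no obstacle: the mathematical content is entirely supplied by Theorem~\ref{thm1}, and the only point requiring attention is that its hypotheses (\,$f$ injective and $\mu$ finite\,) — not the stronger bijectivity or bi-Lipschitz assumptions — are what is actually used, and that these are inherited by $f^{-1}$. The bi-Lipschitz hypothesis enters only to guarantee that $T_f^{-1}$ is itself a continuous composition operator, i.e.\ that $f^{-1}$ satisfies \eqref{condition}.
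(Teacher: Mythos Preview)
Your argument is correct and is precisely the intended one: the paper does not give a separate proof but simply records the corollary as ``an immediate consequence of Theorem~\ref{thm1},'' and your write-up makes explicit the underlying reason, namely that under the bi-Lipschitz hypothesis Theorem~\ref{thm1} applies equally to $f$ and to $g=f^{-1}$, while condition $(LY5)$ is symmetric in $f$ and $f^{-1}$.
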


It is well-known that an invertible operator $T$ on a Banach space $Y$
can be Li-Yorke chaotic without $T^{-1}$ being Li-Yorke chaotic.
Actually, we will see in Example~\ref{inli} that the corollary above
is false without the hypothesis that $\mu$ is finite.

Concerning the notions of dense Li-Yorke chaos and generic Li-Yorke chaos,
we have the results below.

\begin{proposition}\label{DenseLY}
If $\mu$ is finite and $f$ is injective, then the composition operator
$T_f$ is densely Li-Yorke chaotic if and only if it is topologically
transitive.
\end{proposition}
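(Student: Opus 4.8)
The plan is to prove both implications separately, using the equivalences already established in Theorem~\ref{thm1} together with the known characterization of dense Li-Yorke chaos from \cite{BBMP2}. Recall that a continuous linear operator $T$ on a Banach space is densely Li-Yorke chaotic precisely when it has a dense set of semi-irregular vectors (equivalently, when the set of semi-irregular vectors together with $0$ is dense); and $T$ is topologically transitive when for every pair of nonempty open sets $U, V$ there is $n$ with $T^n(U) \cap V \neq \emptyset$. Since dense Li-Yorke chaos trivially implies Li-Yorke chaos, the forward direction should follow by combining dense Li-Yorke chaos with transitivity-type consequences; but the cleaner route is to prove transitivity directly from the existence of a dense scrambled set.

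For the direction "densely Li-Yorke chaotic $\Rightarrow$ topologically transitive": I would first invoke $(LY1)\Leftrightarrow(LY7)$ from Theorem~\ref{thm1} to get that $T_f$ admits a characteristic function $\rchi_B$ (with $0 < \mu(B) < \infty = \mu(X)$ finite, so in fact $\mu(B) \le \mu(X) < \infty$) as a semi-irregular vector, so $\liminf_n \mu(f^{-n}(B)) = 0$. Since $\mu$ is finite and $f$ is injective, one has for any measurable $A$ that $\mu(f^{-n}(A)) \le \mu(f^{-n}(X)) \le \mu(X)$, and injectivity plus \eqref{condition} gives control in both directions on how measure propagates; in particular $T_f$ is an isometry-like map up to the constant $c$. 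The key point is that $\liminf_n \mu(f^{-n}(B)) = 0$ for one set of positive measure, combined with finiteness of $\mu$, should force $\liminf_n \|T_f^n \varphi\| = 0$ for a dense set of $\varphi$ — indeed for all simple functions, hence for all $\varphi \in L^p$ — which says $0$ is in the closure of the orbit of every vector; this is one of the two ingredients for transitivity via the Birkhoff-type criterion. The second ingredient (density of union of ranges / orbit of $0$ being "reachable") needs the hypercyclicity-flavored argument: here I expect one uses that $f$ injective and $\mu$ finite make $T_f$ bounded below on a suitable subspace, so that transitivity reduces to a range-density statement that follows from the dense supply of semi-irregular vectors whose forward orbits return near themselves.

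For the converse "topologically transitive $\Rightarrow$ densely Li-Yorke chaotic": topological transitivity of $T_f$ on the separable Banach space $L^p$ means $T_f$ is hypercyclic (transitivity and hypercyclicity coincide here by the Birkhoff transitivity theorem, since $L^p(X,\cB,\mu)$ with $\mu$ finite and appropriately $\sigma$-finite is separable — one may need to note separability is part of the standing setup or add it). Hypercyclic operators always have a dense orbit, and it is classical (and recorded in \cite{BBMP2}) that a hypercyclic operator on a Banach space is densely Li-Yorke chaotic: a dense orbit gives, after the standard argument, a dense linear manifold of irregular vectors, each of which is in particular semi-irregular, and any dense set of semi-irregular vectors yields a dense scrambled set. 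So this direction is essentially a citation of the general linear-dynamics fact once transitivity is identified with hypercyclicity.

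The main obstacle will be the forward direction, specifically establishing topological transitivity from a \emph{dense} scrambled set rather than from hypercyclicity: a dense scrambled set does not a priori give a dense orbit, so one cannot simply quote "dense Li-Yorke $\Rightarrow$ hypercyclic" (which is false in general). The right strategy is to use the characterization $(LY3)$: a dense scrambled set forces, via $(LY1)\Rightarrow(LY2)$ applied inside small balls, that $\liminf_n \|T_f^n\varphi\| = 0$ on a dense set, and then the finiteness of $\mu$ and injectivity of $f$ (so that $\|T_f^n\varphi\| = \big(\int |\varphi\circ f^n|^p\,d\mu\big)^{1/p}$ is comparable to a sum over the pieces of $B$) let one upgrade this to: for every $\varphi$, $\liminf_n\|T_f^n\varphi\|=0$. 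Combined with the fact that $T_f$ is bounded below (from \eqref{condition}: $\|T_f\varphi\|^p = \int|\varphi|^p\circ f\,d\mu \ge c\int|\varphi|^p\,d\mu$, so $\|T_f\varphi\| \ge c^{1/p}\|\varphi\|$), transitivity will follow from Ansari-type / Bès–Peris arguments that an operator bounded below with $0$ in the closure of every orbit, on a space where the set of such "returning" vectors is dense, is transitive — this last step, making that implication precise in the composition-operator setting, is where the real work lies and where I would spend most of the effort.
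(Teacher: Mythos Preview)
Your converse direction (topologically transitive $\Rightarrow$ densely Li-Yorke chaotic) is fine and matches the paper: this is a direct citation of \cite[Remark~22]{BBMP2}.

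Your forward direction, however, has a genuine gap. You begin by invoking $(LY1)\Leftrightarrow(LY7)$ from Theorem~\ref{thm1}, but this only uses \emph{ordinary} Li-Yorke chaos and produces a single set $B$ with $\liminf_n\mu(f^{-n}(B))=0$; the hypothesis that the scrambled set is \emph{dense} never enters. You then assert that one can ``upgrade'' to $\liminf_n\|T_f^n\varphi\|=0$ for all simple functions, hence for all $\varphi\in L^p$. This is false: since $f:X\to X$ we have $f^{-n}(X)=X$, so $\|T_f^n\rchi_X\|^p=\mu(X)>0$ for every $n$; thus $\rchi_X$ is a simple function whose orbit stays bounded away from zero. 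Consequently the premise of your closing ``Ansari-type / B\`es--Peris'' step is not available, and that step is in any case left as a heuristic rather than an argument.

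The paper's proof exploits density in an essential way. By \cite[Theorem~10]{BBMP2}, dense Li-Yorke chaos gives irregular vectors arbitrarily close to $\rchi_X$; from such a $\varphi$ with $\|\varphi-\rchi_X\|^p<\eps^{p+1}$ one extracts a set $B=\{|\varphi-1|<\eps\}$ with $\mu(X\backslash B)<\eps$ and $\liminf_n\mu(f^{-n}(B))=0$. Lemma~\ref{cwandering} together with Remark~\ref{RW} then yields a forward weakly wandering set $W\subset B$ with $\mu(X\backslash W)<\eps$ and $\liminf_n\mu(f^n(W))=0$. Since $\eps>0$ is arbitrary, this is precisely condition (C4) of \cite[Remark~2.1]{BDP}, which characterizes topological transitivity of $T_f$. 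The crucial idea you are missing is that density of irregular vectors lets you take $B$ to cover \emph{almost all} of $X$, not merely to have positive measure; this is what makes the transitivity criterion from \cite{BDP} applicable.
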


\begin{proposition}\label{GenericLY}
If $\mu$ is finite, then the composition operator $T_f$ is not generically
Li-Yorke chaotic.
\end{proposition}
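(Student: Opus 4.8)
The plan is to argue that if $\mu$ is finite, then a scrambled set for $T_f$ can never be residual in $L^p(X,\cB,\mu)$, because the set of vectors whose orbit fails to return near $0$ is itself comeager. Concretely, I would first recall that a semi-irregular vector for $T_f$ is a $\varphi \not\equiv 0$ with $\liminf_n \|T_f^n\varphi\| = 0$ (this is the content of the equivalence $(LY1)\Leftrightarrow(LY2)$ in Theorem~\ref{thm1}, valid since $\mu$ is finite; for the finite-measure case we do not even need injectivity here, only the direction that scrambled pairs force a semi-irregular vector, which holds on any Banach space by \cite{BBMP2}). The key point is that every element of a scrambled set $S$, except possibly one, must be a semi-irregular vector: if $x,y \in S$ are distinct then $x-y$ is semi-irregular, and a standard argument (if neither $x$ nor $y$ were semi-irregular then $\|T_f^n(x-y)\|$ would stay bounded away from $0$ along a subsequence where both $\|T_f^n x\|,\|T_f^n y\|\to 0$ fails $\dots$) — more cleanly, one shows the set of vectors that are \emph{not} semi-irregular together with $0$ is closed under the relevant operations so that $S$ minus one point lies in the semi-irregular vectors. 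So it suffices to prove that the set
\[
\Sigma = \{\varphi \in L^p(X,\cB,\mu) : \liminf_{n\to\infty}\|T_f^n\varphi\| = 0\}
\]
is \emph{not} residual; equivalently, that its complement is not meager, i.e. contains a nonmeager (indeed, I will aim for ``comeager'') set.

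Next I would exploit finiteness of $\mu$ together with condition~(\ref{condition}) to get a lower bound on $\|T_f^n\varphi\|$ that prevents the $\liminf$ from being $0$ for a large class of $\varphi$. Since $\mu(f(B)) \ge c\,\mu(B)$, iterating gives $\mu(f^n(B)) \ge c^n \mu(B)$, but more usefully, for the \emph{backward} images one uses that $\mu(X) < \infty$ forces control: for any $B$ with $\mu(B) > 0$, $\|T_f^n \mathbf{1}_B\|_p^p = \mu(f^{-n}(B))$, and one checks $\sum_n$ or $\liminf$ behavior. The cleanest route: take $\varphi = \mathbf{1}_X$, the constant function $1$ (which lies in $L^p$ since $\mu(X)<\infty$). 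Then $T_f^n\varphi = \mathbf{1}_X \circ f^n = \mathbf{1}_X$, so $\|T_f^n\varphi\| = \mu(X)^{1/p}$ for all $n$; hence $\varphi$ is a cyclic-type vector staying at constant norm, and the closed set of vectors with $\liminf_n \|T_f^n\psi\| \ge \delta$ (for suitable $\delta$) has nonempty interior around $\varphi$. More precisely, $\{\psi : \|T_f^n\psi\| > \delta \text{ for all } n\}$ need not be open, so instead I would show $\Sigma$ is contained in a meager $F_\sigma$: write $\Sigma = \bigcap_{k} \bigcup_{N} \{\varphi : \exists n \ge N,\ \|T_f^n\varphi\| < 1/k\}$; each $\{\varphi : \exists n\ge N, \|T_f^n\varphi\| < 1/k\}$ is open, but to show $\Sigma$ meager I show each of these open sets, while dense, has the property that its intersection structure collapses — rather, I reverse it: I show the complement of $\Sigma$ contains $G_\delta$-dense set. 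The honest and simplest argument: the map $\psi \mapsto \|T_f^N \psi\|$ is continuous, and $T_f$ is bounded below on the line $\R\cdot\mathbf{1}_X$ at the fixed value $\mu(X)^{1/p}|\lambda|$; by continuity there is a neighborhood $U$ of $\mathbf{1}_X$ on which $\|T_f^n\psi\| \ge \frac12\mu(X)^{1/p}$ for all $n$ — this \emph{is} true because $T_f^n\psi = \psi\circ f^n$ and $\|\psi\circ f^n - \mathbf{1}_X\|_p \le$ (uniform bound via $\|T_f^n\|$, but $T_f$ need not be power-bounded!).

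So the main obstacle, and where I must be careful, is precisely that $T_f$ need not be power-bounded, so a single open ball around $\mathbf{1}_X$ need not work simultaneously for all $n$. The fix I anticipate: use a Baire-category / ``sliding hump'' argument in reverse. Observe that if $T_f$ is \emph{not} Li-Yorke chaotic then $\Sigma = \{0\}$ (by $(LY1)\Leftrightarrow(LY2)$) which is certainly not residual, so we may assume $T_f$ \emph{is} Li-Yorke chaotic and hence by $(LY5)$ there is $B$ of finite positive measure with $\liminf_n\mu(f^{-n}(B)) = 0$. The decomposition $X = B \sqcup (X\setminus B)$ and finiteness of $\mu$ then let me split any $\varphi = \varphi\mathbf{1}_B + \varphi\mathbf{1}_{X\setminus B}$; I will show the set of $\varphi$ with $\liminf_n\|T_f^n\varphi\| = 0$ sits inside a countable union of nowhere dense sets by showing, for each fixed $N$ and $\eps$, that $\{\varphi : \|T_f^n\varphi\| \ge \eps \text{ for all } n\le N\} \supseteq$ a translate-and-scale of a fixed ball, and then diagonalizing. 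I expect the cleanest writeup to instead directly produce a comeager set of vectors that are \emph{not} semi-irregular — for instance, using that $T_f^* $ has dense range or that $\bigcup_n f^{-n}(X\setminus B)$ behaves well — and then conclude that no scrambled set, which omits at most one point of the complement of $\Sigma$, can be comeager. The technical heart is thus the Baire-category estimate showing $\Sigma$ is meager, and I would model it on the corresponding argument for generic Li-Yorke chaos in \cite{BBMP2}, adapted using the concrete description $\|T_f^n\mathbf{1}_B\|_p^p = \mu(f^{-n}(B))$ and the finiteness of $\mu$.
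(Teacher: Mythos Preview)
Your reduction to ``it suffices to prove that $\Sigma = \{\varphi : \liminf_n \|T_f^n\varphi\| = 0\}$ is not residual'' is a dead end. The implication is valid, but the conclusion you are aiming for is false in general: when $\mu$ is finite and $T_f$ is densely Li-Yorke chaotic (equivalently, by Proposition~\ref{DenseLY}, topologically transitive when $f$ is injective), the irregular vectors are dense, and since $\Sigma$ is a $G_\delta$ set containing them, $\Sigma$ is residual. Such examples exist, so no amount of Baire-category estimation will show $\Sigma$ is meager or even non-residual under the sole hypothesis $\mu(X)<\infty$. Your later attempts (the open ball around $\mathbf 1_X$, the sliding-hump decomposition) are all trying to establish something that simply isn't true.

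You already have the key object in hand: the constant function $\varphi=\rchi_X$, which lies in $L^p$ because $\mu$ is finite and satisfies $\|T_f^n\varphi\|^p=\mu(X)>0$ for every $n$. What you are missing is the correct characterization to plug it into. The paper invokes \cite[Theorem~34]{BBMP2}: a bounded operator on a Banach space is generically Li-Yorke chaotic if and only if \emph{every} nonzero vector is semi-irregular. This is far stronger than ``the semi-irregular vectors are residual,'' and it turns the problem into exhibiting a single nonzero vector that is not semi-irregular. The vector $\rchi_X$ does the job in one line, and the proof is complete. Your translation argument (residual scrambled set $S$ gives $S-x_0$ residual inside the semi-irregular vectors) is the easy half of that theorem; the nontrivial content of \cite[Theorem~34]{BBMP2} is precisely the upgrade from ``residual'' to ``all nonzero vectors,'' and that is what makes the argument go through.
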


\begin{remark}
We will see in Section~\ref{Se4} that there exist generically Li-Yorke
chaotic composition operators that are not topologically transitive.
This shows that we cannot remove the hypothesis that $\mu$ is finite
in the previous propositions.
\end{remark}

The proofs of the previous results will be given in the next section.
In the case of Theorem~\ref{thm1}, a key role will be played by the notions
of backward weakly wandering set and forward weakly wandering set~\cite{HaKa}.
In Section~\ref{Se} we will present several counterexamples.


\section{Proofs of the main results}\label{proofs}


\subsection*{Proof of Theorem \ref{thm2}}

Assume $T_f$ Li-Yorke chaotic and let $\varphi \in L^p(X,\cB,\mu)$ be an
irregular vector for $T_f$. Consider the measurable sets
$$
B_i = \{x \in X : 2^{i-1} \leq |\varphi(x)| < 2^i\} \ \ \ \ (i \in \Z)
$$
and let
$$
I = \{i \in \Z : \mu(B_i) > 0\}.
$$
Because $\sum_{i \in \Z} 2^{(i-1)p} \mu(B_i) \leq \int_X |\varphi|^p d\mu
< \infty$, we have that $0 < \mu(B_i) < \infty$ for all $i \in \Z$.
Since $\varphi$ is an irregular vector for $T_f$, there is an increasing
sequence $(\alpha_j)_{j \in \N}$ of positive integers such that
$\lim_{j \to \infty} \|T_f^{\alpha_j} \varphi\| = 0$. This implies (A).
Now, suppose that (B) is false. Then, there is a constant $C < \infty$
such that
$$
\mu\big(f^{-n}(B_i)\big) \leq C \mu(B_i) \ \ \text{ whenever } i \in \Z
  \text{ and } n \in \N.
$$
Hence, for each $n \in \N$,
\begin{align*}
\|T_f^n \varphi\|^p &= \sum_{i \in \Z} \int_{f^{-n}(B_i)}
                       |\varphi \circ f^n|^p d\mu
                     \leq \sum_{i \in \Z} 2^{ip} \mu\big(f^{-n}(B_i)\big)\\
                    &\leq 2^p C \sum_{i \in \Z} 2^{(i-1)p} \mu(B_i)
                     \leq 2^p C \|\varphi\|^p.
\end{align*}
This contradicts the fact that the $T_f$-orbit of $\varphi$ is unbounded.

Let us now prove the converse. Let $Y$ be the closed linear span of
$\{\chi_{B_i} : i \in I\}$ in $L^p(X,\cB,\mu)$. It follows from (A)
that the set $R_1$ of all vectors $\varphi$ in $Y$ whose $T_f$-orbit
has a subsequence converging to zero is residual in $Y$.
For each $i \in I$, let
$$
\phi_i = \frac{1}{\mu(B_i)^\frac{1}{p}} \cdot \chi_{B_i} \in Y.
$$
Then
$$
\|\phi_i\| = 1 \ \ \text{ and } \ \
\|T_f^n \phi_i\|^p = \frac{\mu\big(f^{-n}(B_i)\big)}{\mu(B_i)}
\ \ \ (n \in \N).
$$
Therefore, (B) gives $\sup_{n \in \N} \|T_f^n|_Y\| = \infty$. Hence,
by the Banach-Steinhaus Theorem \cite[Theorem~2.5]{R}, the set $R_2$
of all vectors $\varphi$ in $Y$ whose $T_f$-orbit is unbounded is residual
in $Y$. Since each $\varphi \in R_1 \cap R_2$ is an irregular vector for
$T_f$, we conclude that $T_f$ is Li-Yorke chaotic.


\subsection*{Proof of Corollary \ref{Cor1}}
Suppose that there exists such a set $B$. If
$\limsup_{n \to \infty} \mu\big(f^{-n}(B)\big) \neq 0$,
then $\varphi =\chi_B$ is a semi-irregular vector for $T_f$ because of (i),
implying that $T_f$ is Li-Yorke chaotic. Otherwise,
$\lim_{n \to \infty} \mu\big(f^{-n}(B)\big) = 0$.
Set $B_i = f^i(B)$ for each $i \in \Z$. Since $f$ is injective, condition (A)
of Theorem~\ref{thm2} is satisfied. For any $n < m$, we have that
$B_n = f^n(B) = f^{n-m}(f^m(B)) = f^{n-m}(B_m)$, which gives
\begin{align*}
\sup\Bigg\{\frac{\mu\big(f^n(B)\big)}{\mu\big(f^m(B)\big)}
           : n \in \Z, m \in I, n < m\Bigg\}
&= \sup\Bigg\{\frac{\mu\big(f^{n-m}(B_m)\big)}{\mu(B_m)}
                 : n \in \Z, m \in I, n < m\Bigg\}\\
&= \sup\Bigg\{\frac{\mu\big(f^{-n}(B_i)\big)}{\mu(B_i)}
                 : i \in I, n \in \N\Bigg\}.
\end{align*}
In this way, (ii) implies condition (B) of Theorem~\ref{thm2}.
Thus, $T_f$ is Li-Yorke chaotic.


\subsection*{Proof of Corollary \ref{cthm2}}

For the sufficiency of the conditions, it is enough to choose $k \in \Z$
such that $0 < \mu(\{k\}) < \infty$ and to apply Corollary~\ref{Cor1}
with $B = \{k\}$.

For the necessity of the conditions, note that (a) follows from the fact
that $(LY1)$ always implies $(LY3)$ (Remark~\ref{rthm1}). If (b) is false,
then there is a constant $C \in (0,\infty)$ with
$$
\mu(\{i\}) \leq C \mu(\{j\}) \ \ \text{ whenever } i,j \in \Z \text{ and }
  i < j.
$$
Hence, for every $\varphi \in L^p(\Z,\cP(\Z),\mu)$ and every $n \in \N$,
\begin{align*}
\|T_f^n \varphi\|^p &= \sum_{i \in \Z} |\varphi(i+n)|^p \mu(\{i\})
                     = \sum_{i \in \Z} |\varphi(i)|^p \mu(\{i-n\})\\
                    &\leq C \sum_{i \in \Z} |\varphi(i)|^p \mu(\{i\})
                     = C \|\varphi\|^p.
\end{align*}
Thus, all orbits under $T_f$ are bounded, contradicting the fact
that $T_f$ is Li-Yorke chaotic.


\subsection*{Proof of Theorem \ref{thm1}}\label{Proof}

In order to prove this theorem, we need the concept of weakly wandering set
and a couple of basic lemmas. The concept was first defined in \cite{HaKa}.
Lemma~4 of that paper is analogous to the lemmas we prove here. However,
our hypotheses on $f$ are different and hence that lemma does not apply
directly to our situation. Hence, we include the proofs.

We say that a measurable set $W$ is a {\it backward weakly wandering set}
for $f$ if there exists a sequence of positive integers
$k_1 < k_2 < k_3 < \cdots$ such that the measurable sets
$
\{W,f^{-k_1}(W),f^{-k_2}(W),f^{-k_3}(W),\ldots\}
$
are pairwise disjont. Likewise, if the measurable sets
$
\{W,f^{k_1}(W),f^{k_2}(W),f^{k_3}(W),\ldots\}
$
are pairwise disjoint, then we say that $W$ is a {\it forward weakly
wandering set} for $f$.

\begin{lemma}\label{wandering}
	If there exists $B \in \cB$ such that
	\begin{equation}\label{xd3}
	\mu(B) > 0
	\ \ \text{ and } \ \
	\liminf_{k \to \infty} \mu\big(f^k(B)\big) = 0,
	\end{equation}
	then $f$ admits a backward weakly wandering set $W \subset B$ of positive
	$\mu$-measure.
\end{lemma}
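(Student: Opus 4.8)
The plan is to build $W$ by deleting from $B$ countably many forward images $f^{m}(B)$, chosen so that the pairwise disjointness of $W,f^{-k_1}(W),f^{-k_2}(W),\dots$ becomes automatic for set-theoretic reasons; the hypothesis $\liminf_{k}\mu\big(f^{k}(B)\big)=0$ enters only to keep the total deleted measure strictly below $\mu(B)$.

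First I would record the elementary consequence of \eqref{condition} that $\mu\big(f^{m}(A)\big)\ge c^{m}\mu(A)$ for every $A\in\cB$ and every $m\ge 0$ (iterate the inequality), and hence, since $f^{k}(B)=f^{\ell}\big(f^{k-\ell}(B)\big)$,
\[
\mu\big(f^{k-\ell}(B)\big)\;\le\;c^{-\ell}\,\mu\big(f^{k}(B)\big)\qquad(0\le\ell\le k).
\]
Next, using $\liminf_{k}\mu\big(f^{k}(B)\big)=0$, I would recursively choose integers $k_1<k_2<k_3<\cdots$ so that $\mu\big(f^{k_j}(B)\big)$ is small enough — small relative to $\mu(B)$ and to the finitely many already-chosen exponents $k_1,\dots,k_{j-1}$, through the factor $c^{-k_{j-1}}$ in the estimate above — to guarantee
\[
\sum_{j\ge 1}\mu\big(f^{k_j}(B)\big)\;+\;\sum_{1\le i<j}\mu\big(f^{k_j-k_i}(B)\big)\;<\;\mu(B),
\]
which is possible because each $\mu\big(f^{k_j-k_i}(B)\big)$ is bounded by $c^{-k_i}\mu\big(f^{k_j}(B)\big)$, so a geometrically fast choice of $\mu\big(f^{k_j}(B)\big)$ suffices.

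Then I would put
\[
W\;=\;B\setminus\Bigg(\bigcup_{j\ge 1}f^{k_j}(B)\;\cup\;\bigcup_{1\le i<j}f^{k_j-k_i}(B)\Bigg),
\]
which lies in $\cB$ since $f$ is bimeasurable, and which satisfies $\mu(W)>0$ by the choice of the $k_j$. It then remains to verify disjointness. If $x\in W\cap f^{-k_i}(W)$, then $x\in W\subseteq B$ forces $f^{k_i}(x)\in f^{k_i}(B)$, while $f^{k_i}(x)\in W$ and $W$ is disjoint from $f^{k_i}(B)$ — a contradiction. If $i<j$ and $x\in f^{-k_i}(W)\cap f^{-k_j}(W)$, then $f^{k_i}(x)\in W\subseteq B$, so $f^{k_j}(x)=f^{k_j-k_i}\big(f^{k_i}(x)\big)\in f^{k_j-k_i}(B)$, while $f^{k_j}(x)\in W$ and $W$ is disjoint from $f^{k_j-k_i}(B)$ — again a contradiction. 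Hence $W\subseteq B$ is a backward weakly wandering set of positive $\mu$-measure.

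The place that needs care — the \emph{main obstacle} — is precisely the differences $k_j-k_i$: the naive choice $W=B\setminus\bigcup_j f^{k_j}(B)$ kills the pairs $(W,f^{-k_i}(W))$ but not the pairs $(f^{-k_i}(W),f^{-k_j}(W))$, because $k_j-k_i$ need not be one of the $k_\ell$. Also excluding the sets $f^{k_j-k_i}(B)$ repairs this, and the point is that \eqref{condition} bounds $\mu\big(f^{k_j-k_i}(B)\big)$ by $c^{-k_i}\mu\big(f^{k_j}(B)\big)$, so the whole construction is still driven by the single null sequence $\mu\big(f^{k_j}(B)\big)\to 0$. Note that neither injectivity of $f$ nor finiteness of $\mu$ is needed for this argument.
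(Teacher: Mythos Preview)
Your proposal is correct and follows essentially the same route as the paper: one removes from $B$ the forward images $f^{k_j-k_i}(B)$ (including the case $i=0$, $k_0=0$, which you wrote out separately), uses the estimate $\mu\big(f^{k_j-k_i}(B)\big)\le c^{-k_i}\mu\big(f^{k_j}(B)\big)$ coming from \eqref{condition} to keep the deleted measure below $\mu(B)$, and then checks backward disjointness by the same set-theoretic observation. The only cosmetic difference is that the paper fixes explicit thresholds $\epsilon_i=\mu(B)/(2i\cdot 2^i)$ in place of your ``geometrically fast choice,'' and verifies disjointness via the inclusion $f^{-k_i}(W)\cap f^{-k_j}(W)\subset f^{-k_i}\big(W\cap f^{k_i-k_j}(W)\big)$ rather than by your element-chasing argument.
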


\begin{proof}
	Note that, by (\ref{condition}) and the second condition in (\ref{xd3}),
	$\mu(B)$ is necessarily finite. Let $\epsilon = \mu(B)/2$ and
	$\epsilon_i = \epsilon/(i\cdot 2^i)$ for all $i \ge 1$.
	By the second condition in (\ref{xd3}), we can construct a sequence
	$0 = k_0 < k_1 < k_2 < \cdots$ of non-negative integers such that
	\begin{equation}\label{maxxx}
	\max\left\{{c^{-r}} : 0 \le r \le k_{i-1}\right\} \cdot
	\mu\big(f^{k_{i}}(B)\big) < \epsilon_i \ \text{ for all } i \ge 1.
	\end{equation}
	By (\ref{condition}), $\mu\big(f^{-k}(S)\big) \le {c^{-k}} \mu(S)$
	whenever $k \ge 0$ and $S \in \cB$. This together with
	(\ref{maxxx}) yield, for all $0 \le j \le i-1$,
	\begin{equation}\label{33}
	\mu\left(f^{k_i - k_j}(B)\right)
	\le \mu\left(f^{-k_j}\big(f^{k_i}(B)\big)\right)
	\le {c^{-k_j}} \mu\left(f^{k_{i}}(B)\right)
	< \epsilon_i.
	\end{equation}
	We claim that
	$$
	W = B \Big\backslash \bigcup_{i=1}^\infty \bigcup_{j=0}^{i-1} f^{k_i-k_j}(B)
	$$
	is a backward weakly wandering set of positive $\mu$-measure.
	In fact, by (\ref{33}), we have that
	$$
	\mu(W)
	\ge \mu(B) - \mu\bigg(\bigcup_{i=1}^\infty \bigcup_{j=0}^{i-1} f^{k_i-k_j}(B)\bigg)
	\ge \mu(B) - \sum_{i=1}^\infty i \epsilon_i = \frac{\mu(B)}{2} > 0.
	$$
	Moreover, by the definition of $W$, for each $i \ge 1$ and $0 \le j \le i-1$,
	\begin{equation}\label{Wf}
	W \cap f^{k_i - k_j}(W) = \emptyset,
	\end{equation}
	and therefore
	$$
	\emptyset = f^{-k_i}\big(W \cap f^{k_i-k_j}(W)\big)
	= f^{-k_i}(W) \cap f^{-k_i}\big(f^{k_i-k_j}(W) \big)
	\supset f^{-k_i}(W) \cap f^{-k_j}(W).
	$$
	This proves that the sets $W = f^{-k_0}(W),f^{-k_1}(W),f^{-k_2}(W),\ldots$
	are pairwise disjoint, which means that $W$ is a backward weakly wandering
	set.
\end{proof}

\begin{lemma}\label{cwandering}
	Assume $f$ injective. If there exists $B \in \cB$ such that
	\begin{equation}\label{la1}
	0 < \mu(B) < \infty
	\ \ \text{ and } \ \
	\liminf_{k \to \infty} \mu\big(f^{-k}(B)\big) = 0,
	\end{equation}
	then $f$ admits a forward weakly wandering set $W \subset B$ of positive
	$\mu$-measure.
\end{lemma}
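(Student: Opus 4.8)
The plan is to follow the proof of Lemma~\ref{wandering}, reversed in direction, using injectivity twice. The first use is a reduction. Suppose $W \subseteq B$ and $0 = k_0 < k_1 < k_2 < \cdots$ satisfy $W \cap f^{-(k_i - k_j)}(W) = \emptyset$ for all $0 \le j < i$. Then the sets $W = f^{k_0}(W), f^{k_1}(W), f^{k_2}(W), \ldots$ are pairwise disjoint: if $f^{k_i}(w) = f^{k_j}(w')$ with $w, w' \in W$ and $j < i$, then injectivity of $f^{k_j}$ forces $w' = f^{k_i - k_j}(w)$, so $w \in W \cap f^{-(k_i - k_j)}(W) = \emptyset$, a contradiction. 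Hence it suffices to produce such a sequence $(k_i)$ together with a $W$ of positive $\mu$-measure; the obvious candidate is
$$
W = B \setminus \bigcup_{i=1}^{\infty}\bigcup_{j=0}^{i-1} f^{-(k_i - k_j)}(B),
$$
since then $W \cap f^{-(k_i - k_j)}(W) \subseteq W \cap f^{-(k_i - k_j)}(B) = \emptyset$ for free.

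The heart of the matter --- and the only real obstacle --- is choosing the times $k_i$. In Lemma~\ref{wandering} every difference was dominated by the single quantity $\mu\big(f^{k_i}(B)\big)$, whereas here the hypothesis only provides integers $m$ with $\mu\big(f^{-m}(B)\big)$ arbitrarily small, while the differences $k_i - k_j$ sweep out a whole window of values. I would resolve this by a partial-sums construction in the style of \cite{HaKa}: set $\epsilon = \mu(B)/2$, $\epsilon_i = \epsilon/(i\cdot 2^i)$ and $M_{i-1} = \max\{c^{-r} : 0 \le r \le k_{i-1}\}$; given $k_{i-1}$, use $\liminf_{m \to \infty}\mu\big(f^{-m}(B)\big) = 0$ to pick $m_i \ge 1$ with $M_{i-1}\,\mu\big(f^{-m_i}(B)\big) < \epsilon_i$, and put $k_i = k_{i-1} + m_i$ (so $k_i = m_1 + \cdots + m_i$). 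Then for every $0 \le j \le i-1$ we have $k_i - k_j = m_i + (k_{i-1} - k_j)$ with $0 \le k_{i-1} - k_j \le k_{i-1}$, and the inequality $\mu\big(f^{-k}(S)\big) \le c^{-k}\mu(S)$ deduced from (\ref{condition}) yields
$$
\mu\big(f^{-(k_i - k_j)}(B)\big) = \mu\Big(f^{-(k_{i-1} - k_j)}\big(f^{-m_i}(B)\big)\Big) \le c^{-(k_{i-1} - k_j)}\,\mu\big(f^{-m_i}(B)\big) \le M_{i-1}\,\mu\big(f^{-m_i}(B)\big) < \epsilon_i,
$$
which is the analogue of estimate (\ref{33}) in the earlier proof.

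Finally I would add everything up. Because $0 < \mu(B) < \infty$,
$$
\mu(W) \ge \mu(B) - \sum_{i=1}^{\infty}\sum_{j=0}^{i-1}\mu\big(f^{-(k_i - k_j)}(B)\big) \ge \mu(B) - \sum_{i=1}^{\infty} i\,\epsilon_i = \mu(B) - \epsilon = \frac{\mu(B)}{2} > 0,
$$
and, as already observed, $W \cap f^{-(k_i - k_j)}(W) = \emptyset$ for all $0 \le j < i$, so by the first paragraph the sets $W = f^{k_0}(W), f^{k_1}(W), f^{k_2}(W), \ldots$ are pairwise disjoint. Since $k_1 < k_2 < \cdots$ is an increasing sequence of positive integers, $W$ is a forward weakly wandering set for $f$, contained in $B$ and of positive $\mu$-measure. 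Apart from the partial-sums choice of $(k_i)$, this is routine bookkeeping parallel to Lemma~\ref{wandering}.
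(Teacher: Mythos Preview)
Your argument is correct and is essentially the paper's own proof: the same $\epsilon_i$, the same inductive choice of $k_i$ via the condition $\max\{c^{-r}:0\le r\le k_{i-1}\}\cdot\mu\big(f^{-(k_i-k_{i-1})}(B)\big)<\epsilon_i$ (your ``partial sums'' $k_i=k_{i-1}+m_i$ is exactly this), the same set $W$, and the same use of injectivity to pass from $W\cap f^{-(k_i-k_j)}(W)=\emptyset$ to pairwise disjointness of the forward images. The only difference is expository order; note also that injectivity is used just once, not twice.
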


\begin{proof}
	Let $\epsilon = \mu(B)/2$ and $\epsilon_i = \epsilon/(i\cdot 2^i)$ for all
	$i \ge 1$. By the second condition in (\ref{la1}), there is a sequence
	$0 = k_0 < k_1 < k_2 < \cdots$ of non-negative integers such that
	$$
	\max\left\{{c^{-r}} : 0 \le r \le k_{i-1}\right\} \cdot
	\mu\big(f^{-(k_{i}-k_{i-1})}(B)\big) < \epsilon_i \ \text{ for all } i \ge 1.
	$$
	Thus, for all $0 \le j \le i-1$,
	\begin{align}\label{la3}
	\mu\left(f^{-(k_i - k_j)}(B)\right)
	&= \mu\left(f^{-(k_{i-1}-k_j)}\big(f^{-(k_i-k_{i-1})}(B)\big)\right)\notag\\
	& \le c^{-(k_{i-1}-k_j)} \mu\left(f^{-(k_i-k_{i-1})}(B)\right) < \epsilon_i.
	\end{align}
	We claim that
	$$
	W = B \Big\backslash \bigcup_{i=1}^\infty \bigcup_{j=0}^{i-1} f^{-(k_i-k_j)}(B)
	$$
	is a forward weakly wandering set of positive $\mu$-measure.
	In fact, by (\ref{la3}), we have that
	$$
	\mu(W)
	\ge \mu(B) - \mu\bigg(\bigcup_{i=1}^\infty \bigcup_{j=0}^{i-1} f^{-(k_i-k_j)}(B)\bigg)
	\ge \mu(B) - \sum_{i=1}^\infty i \epsilon_i = \frac{\mu(B)}{2} > 0.
	$$
	Moreover, by the definition of $W$, for each $i \ge 1$ and $0 \le j \le i-1$,
	$W \cap f^{-(k_i-k_j)}(W) = \emptyset$, which is equivalent to
	\begin{equation}\label{la4}
	f^{k_i - k_j}(W) \cap W = \emptyset.
	\end{equation}
	By the equality $f^{k_i}(W) = f^{k_j}\big(f^{k_i-k_j}(W)\big)$, by the
	injectivity of $f$ and by (\ref{la4}), we reach
	$$
	f^{k_i}(W) \cap f^{k_j}(W) = f^{k_j}\big(f^{k_i-k_j}(W)\big) \cap f^{k_j}(W)
	= f^{k_j}\big(f^{k_i-k_j}(W) \cap W\big) = \emptyset,
	$$
	which proves the claim.
\end{proof}

\begin{remark}\label{RW}
Given any $\delta > 0$, by replacing $\eps = \mu(B)/2$ by $\eps = \mu(B)/n$
with $n$ big enough in the proofs of Lemmas~\ref{wandering}
and~\ref{cwandering}, we see that the subset $W$ of $B$ can be chosen so that
$\mu(B \backslash W) < \delta$.
\end{remark}

Let us now prove Theorem~\ref{thm1}. We begin with the implications that
always hold (see Remark~\ref{rthm1}).

\medskip
\noindent $(LY1) \Rightarrow (LY2)$:
Suppose that $T_f$ is Li-Yorke chaotic. Then it admits a semi-irregular
vector $\varphi$ in $L^p(X,\cB,\mu)$. The second condition in (\ref{ly2})
yields $\varphi \not\equiv 0$.

\medskip
\noindent $(LY2) \Leftrightarrow (LY3)$:
Let $\varphi$ satisfy $(LY2)$. Hence, there exists $\delta > 0$ such that
the set $B = \left\{x \in X : |\varphi(x)| > \delta\right\}$ has positive
$\mu$-measure. Moreover,
\begin{equation}\label{ineq1}
\left\|T_f^k \varphi\right\|^p = \int \big|\varphi \circ f^k\big|^p \,d\mu
\ge \int_{f^{-k}(B)} \big|\varphi \circ f^k\big|^p \,d\mu
\ge \delta^p \mu\big(f^{-k}(B)\big).
\end{equation}
By $(LY2)$ and (\ref{ineq1}),
$\displaystyle \liminf_{k \to \infty} \mu\big(f^{-k}(B)\big) = 0$.
To prove the converse, just take $\varphi = \rchi_B$.

\medskip
\noindent $(LY5) \Rightarrow (LY3)$ and $(LY4)$:
Obvious.

\medskip
\noindent $(LY6) \Leftrightarrow (LY7)$:
Let $\varphi = \rchi_B$, where $B \in \cB$. Since
$$
\left\|T_f^k\varphi\right\|^p = \int \big|\varphi \circ f^k \big|^p \,d\mu
= \int \big|\rchi_B\big|^p \circ f^k \,d\mu
= \mu\big(f^{-k}(B)\big),
$$
it is clear that $(LY6)$ and $(LY7)$ are equivalent properties.

\medskip
\noindent $(LY7) \Rightarrow (LY1)$:
The existence of a semi-irregular vector, by itself, implies that
$T_f$ is Li-Yorke chaotic.

\medskip
The next implication uses the injectivity of $f$.

\medskip
\noindent $(LY5) \Rightarrow (LY6)$:
Let $B$ be as in $(LY5)$. By (\ref{condition}) and the third condition in
$(LY5)$, $\mu(B)$ is finite. The proof consists in constructing a measurable
set $A$ of positive $\mu$-measure and sequences
$n_1 < m_1 < n_2 < m_2 < \cdots$ of positive integers such that
\begin{equation}\label{que}
\lim_{k \to \infty} \mu\big(f^{-{m_k}}(A)\big) = 0
\ \ \text{ and } \ \
\limsup_{k \to \infty} \mu\big(f^{-n_k}(A)\big) > 0.
\end{equation}
Set $n_1 = 1$ and let $m_1 > n_1$ be any integer such that
\begin{equation}\label{max0}
\mu\big(f^{n_1-m_1}(B)\big) < \frac12\cdot
\end{equation}
Such $m_1$ exists because
$\displaystyle\liminf_{\ell \to \infty} \mu\big(f^{-\ell}(B)\big) = 0$.
Assume that $k \geq 2$ and that $n_1 < m_1 < n_2 < m_2 < \cdots
< n_{k-1} < m_{k-1}$ were defined. Let us define $n_k$ and $m_k$ as follows.
By hypothesis,
$\displaystyle\liminf_{\ell \to \infty} \mu\big(f^{\ell}(B)\big) = 0$.
Thus, there exists $n_k > m_{k-1}$ such that
\begin{equation}\label{max1}
\max\left\{{c^{-r}}: 0 \le r \le m_{k-1}\right\} \cdot
\mu\big(f^{n_{k}-m_1}(B)\big) < \dfrac{1}{2^{k}}\cdot
\end{equation}
Likewise, since
$\displaystyle\liminf_{\ell \to \infty} \mu\big(f^{-\ell}(B)\big) = 0$,
there exists $m_k > n_k$ such that
\begin{equation}\label{max2}
\max\left\{c^{-r}: 0 \le r \le n_k\right\} \cdot
\mu\big(f^{-(m_{k}-n_{k})}(B)\big) < \dfrac{1}{k \cdot 2^k}\cdot
\end{equation}
By induction, the infinite sequences $n_1 < m_1 < n_2 < m_2 < \cdots$
satisfy (\ref{max1}) and (\ref{max2}) for all $k \geq 2$.
Now, let us define the set $A$ so that (\ref{que}) is satisfied.
Set $A = \bigcup_{i=1}^\infty f^{n_i}(B)$. Then the second condition
in (\ref{que}) is automatic, because
$$
\mu\left(f^{-n_k}(A)\right) \geq \mu(B) \ \ \text{ for all } k \geq 1.
$$
Let us prove the first condition in (\ref{que}). By the injectivity of $f$,
$f^{-m_j}\big(f^{n_i}(B)\big) = f^{n_i-m_j}(B)$ for all $i,j \ge 1$. Hence,
\begin{equation}\label{Tfmk1}
\mu\left(f^{-m_j}(A)\right)
\le \sum_{i=1}^\infty \mu\big(f^{n_i-m_j}(B)\big)
  = \sum_{i=1}^{j}\mu\big(f^{n_i-m_j}(B)\big)+\sum_{i>j}\mu\big(f^{n_i-m_j}(B)\big).
\end{equation}
To find an upper bound for the sums in (\ref{Tfmk1}), we proceed as follows.
For all $i \ge 2$ and $1 \le j \le i-1$, we have that $0 \le m_j-m_1 \leq
m_{i-1}$. Hence, by (\ref{condition}), (\ref{max1}) and the injectivity
of $f$, for all $i\ge 2$ and $1\le j\le i-1$,
\begin{equation}\label{ad1}
\mu\left( f^{n_{i}-m_j}(B)\right)
  = \mu\left( f^{-(m_j-m_{1})}\left(f^{n_{i}-m_1}(B)\right)\right)
\le {c^{-(m_j-m_1)}} \mu\left(f^{n_{i}-m_1}(B)\right)
  < \dfrac{1}{2^{i}}\cdot
\end{equation}
In the same way, (\ref{max0}) and (\ref{max2}) yield, for all $j \ge 1$
and $1 \le i \le j$,
\begin{align}\label{ad2}
\mu\left(f^{n_i-m_j}(B)\right)
  &= \mu\left( f^{-(n_j-n_i)}\left( f^{-(m_j-n_j)}(B)\right)\right)\notag\\
  &\le {c^{-(n_j-n_i)}} \mu\left(f^{-(m_j-n_j)}(B)\right)
  < \dfrac{1}{j \cdot 2^j}\cdot
\end{align}
Hence,
$$
\sum_{i=1}^\infty \mu\big(f^{n_i-m_j}(B)\big)
\le \sum_{i=1}^j \dfrac{1}{j\cdot 2^j} + \sum_{i>j}\dfrac{1}{2^{i}}
  = \dfrac{1}{2^j} + \dfrac{1}{2^j}\cdot
$$
By (\ref{Tfmk1}),
$\displaystyle\lim_{j \to \infty} \mu\big(f^{-m_j}(A)\big) = 0$.

\medskip
The next implication requires the additional conditions that $\mu$ is finite
and $f$ is injective.

\medskip
\noindent $(LY3) \Rightarrow (LY4)$:
Let $B$ be as in $(LY3)$. By Lemma~\ref{cwandering}, there exist a measurable
set $W \subset B$ and a sequence of positive integers $k_1 < k_2 < k_3 < \cdots$
such that $\mu(W) > 0$ and the sets $W,f^{k_1}(W),f^{k_2}(W),\ldots$ are
pairwise disjoint. In particular,
$$
\sum_{i=1}^\infty \mu\big(f^{k_i}(W)\big)
  = \mu\left(\bigcup_{i=1}^\infty f^{k_i}(W)\right)
\le \mu(X)
  < \infty,
$$
implying that $\displaystyle\liminf_{k \to \infty} \mu\big(f^{k}(W)\big)=0$.

\medskip
The next implication requires the additional condition that $\mu$ is finite.

\medskip
\noindent $(LY4) \Rightarrow (LY5)$:
By Lemma \ref{wandering}, there are a measurable set $W \subset B$ and a
sequence of positive integers $k_1 < k_2 < k_3 < \cdots$ such that
$\mu(W) > 0$ and the sets $W,f^{-k_1}(W),f^{-k_2}(W),\ldots$ are pairwise
disjoint. In particular,
$$
\sum_{i=1}^\infty \mu\big(f^{-k_i}(W)\big)
  = \mu\left(\bigcup_{i=1}^\infty f^{-k_i}(W)\right)
\le \mu(X)
  < \infty,
$$
implying that $\displaystyle\liminf_{k \to \infty} \mu\big(f^{-k}(W)\big)=0$.
On the other hand, as $W \subset B$, we have by $(LY4)$ that
$\displaystyle\liminf_{k \to \infty} \mu\big(f^k(W)\big) = 0$.


\subsection*{Proof of Proposition \ref{DenseLY}}

It was proved in \cite[Theorem~10]{BBMP2} that a continuous linear operator
$T$ on a separable Banach space $Y$ is densely Li-Yorke chaotic if and
only if it admits a dense set of irregular vectors (or a dense set of
semi-irregular vectors). It was also observed in \cite[Remark~22]{BBMP2}
that if an operator is topologically transitive, then it is densely
Li-Yorke chaotic. For the converse, assume $T_f$ densely Li-Yorke chaotic
and let $\eps \in (0,\min\{1,\mu(X)\})$. By the above-mentioned theorem
from \cite{BBMP2}, there is an irregular vector $\varphi$ for $T_f$ such that
$$
\|\varphi - \rchi_X\|^p < \eps^{p+1}.
$$
Set $B = \{x \in X : |\varphi(x) - 1| < \eps\}$.
Then $\mu(X \backslash B) < \eps$. Moreover,
$$
\|T_f^k \varphi\|^p \geq \int_{f^{-k}(B)} |\varphi \circ f^k|^p d\mu
\geq (1-\eps)^p \cdot \mu\big(f^{-k}(B)\big),
$$
that is,
$$
\mu\big(f^{-k}(B)\big) \leq \frac{1}{(1-\eps)^p} \cdot \|T_f^k \varphi\|^p.
$$
Since $\varphi$ is an irregular vector for $T_f$, this yields
$\liminf_{n \to \infty} \mu\big(f^{-n}(B)\big) = 0$.
By Lemma~\ref{cwandering} and Remark~\ref{RW}, there exists a measurable
set $W \subset B$ such that
$$
\mu(X \backslash W) < \eps \ \ \text{ and } \ \
\liminf_{n \to \infty} \mu\big(f^n(W)\big) = 0.
$$
By condition (C4) of \cite[Remark~2.1]{BDP}, $T_f$ is topologically
transitive.


\subsection*{Proof of Proposition \ref{GenericLY}}

It was proved in \cite[Theorem~34]{BBMP2} that a continuous linear operator
$T$ on a Banach space $Y$ is generically Li-Yorke chaotic if and only if
every non-zero vector in $Y$ is semi-irregular for $T$. In our case, if
$\varphi = \rchi_X$ then $\|T_f^n \varphi\|^p = \mu(X)$ for all $n \geq 1$.
In particular, $\varphi$ is not a semi-irregular vector for $T_f$, and so
$T_f$ is not generically Li-Yorke chaotic.


\section{Counterexamples}\label{Se}


\subsection*{The injectivity hypothesis in Corollary~\ref{Cor1}}\label{Se5}

The next example shows that we cannot omit the hypothesis that $f$ is
injective in Corollary~\ref{Cor1}.

\begin{example}\label{ExInjCor2}
Consider $X = (\Z \times \{0\}) \cup (\N \times \N)$ and $\cB = \cP(X)$.
The bimeasurable map $f : X \to X$ is given by
$$
f\big((i,0)\big) = (i+1,0) \ \text{ and } \
f\big((n,j)\big) = (n,j-1) \ \ \ \ \ (i \in \Z, n,j \in \N).
$$
The measure $\mu : \cB \to [0,\infty)$ is defined by
$$
\mu\big(\{(i,0)\}\big) = \frac{1}{2^{|i|}} \ \text{ and } \
\mu\big(\{(n,j)\}\big) =
\begin{cases}
\frac{1}{2^{n-j}} & \text{ if } 1 \leq j < n\\
\ \ 1             & \text{ if } j \geq n
\end{cases}
\ \ \ \ (i \in \Z, n,j \in \N).
$$
Since $\frac{1}{4} \mu(B) \leq \mu\big(f(B)\big) \leq 2 \mu(B)$ for every
$B \in \cB$, $f$ is bi-Lipschitz with respect to $\mu$. If $B = \{(0,0)\}$,
then conditions (i) and (ii) hold. Nevertheless, if $B_i \in \cB$ is
nonempty and satisfies condition (A) of Theorem~\ref{thm2}, then
$B_i \subset \{(k,0) : k \leq 0\}$, and so
$$
\sup_{n \in \N} \frac{\mu\big(f^{-n}(B_i)\big)}{\mu(B_i)} = \frac{1}{2}\cdot
$$
Thus, by Theorem~\ref{thm2}, $T_f$ is not Li-Yorke chaotic.
\end{example}


\subsection*{The converse of Corollary \ref{Cor1} is false}\label{Se3}

In this subsection we assume that $f$ is surjective and bi-Lipschitz with
respect to $\mu$. If $A \in \cB$ and $0 < \mu(A) < \infty$, then
$0 < \mu\big(f^l(A)\big) < \infty$ for all $l \in \Z$, and so we can define
\begin{align*}
\q{f}{A} &= \sup\left\{\frac{\mu\big(f^k(A)\big)}{\mu\big(f^l(A)\big)}
                        : k < l \right\},\\
\qpos{f}{A} &= \sup\left\{\frac{\mu\big(f^k(A)\big)}{\mu\big(f^l(A)\big)}
                        : k < l, \ l \ge 0\right\},\\
\qneg{f}{A} &= \sup\left\{\frac{\mu\big(f^{k}(A)\big)}{\mu\big(f^{l}(A)\big)}
                        : k < l < 0 \right\}.
\end{align*}

\begin{lemma}\label{union}
If $A_1, A_2 \in \cB$ are disjoint sets of finite positive $\mu$-measure
such that $\q{f}{A_i} < \infty$ for each $i \in \{1,2\}$, then
$\q{f}{A_1 \cup A_2}< \infty$. The same holds for corresponding statements
for ${\mathcal Q_{+}}$ and ${\mathcal Q_{-}}$.
\end{lemma}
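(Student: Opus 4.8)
The plan is to reduce the statement to a single elementary estimate for a fixed pair of exponents and then pass to the supremum. The starting point is that images and preimages commute with unions, so $f^m(A_1 \cup A_2) = f^m(A_1) \cup f^m(A_2)$ for every $m \in \Z$; consequently $\mu\big(f^k(A_1 \cup A_2)\big) \le \mu\big(f^k(A_1)\big) + \mu\big(f^k(A_2)\big)$, while at the same time $\mu\big(f^l(A_1 \cup A_2)\big) \ge \mu\big(f^l(A_i)\big)$ for each $i \in \{1,2\}$.

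First I would record that $0 < \mu(A_1 \cup A_2) \le \mu(A_1) + \mu(A_2) < \infty$, so that $\q{f}{A_1 \cup A_2}$ is defined, and that — since $f$ is surjective and bi-Lipschitz with respect to $\mu$, as already noted in this subsection — all the quantities $\mu\big(f^m(A_i)\big)$ and $\mu\big(f^m(A_1 \cup A_2)\big)$ ($m \in \Z$, $i \in \{1,2\}$) are finite and positive, so every ratio written below is well defined. Then, for any integers $k < l$, combining $\mu\big(f^k(A_1 \cup A_2)\big) \le \mu\big(f^k(A_1)\big) + \mu\big(f^k(A_2)\big)$ with the two inequalities $\mu\big(f^l(A_1 \cup A_2)\big) \ge \mu\big(f^l(A_1)\big)$ and $\mu\big(f^l(A_1 \cup A_2)\big) \ge \mu\big(f^l(A_2)\big)$ gives
$$
\frac{\mu\big(f^k(A_1 \cup A_2)\big)}{\mu\big(f^l(A_1 \cup A_2)\big)}
\le \frac{\mu\big(f^k(A_1)\big)}{\mu\big(f^l(A_1)\big)}
  + \frac{\mu\big(f^k(A_2)\big)}{\mu\big(f^l(A_2)\big)}
\le \q{f}{A_1} + \q{f}{A_2}.
$$
Taking the supremum over all pairs $k < l$ yields $\q{f}{A_1 \cup A_2} \le \q{f}{A_1} + \q{f}{A_2} < \infty$. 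For the statements about $\mathcal{Q}_{+}$ and $\mathcal{Q}_{-}$ the same chain of inequalities applies verbatim; one only restricts the range of the admissible pairs $(k,l)$ in the supremum — to $l \ge 0$, respectively to $k < l < 0$ — obtaining $\qpos{f}{A_1 \cup A_2} \le \qpos{f}{A_1} + \qpos{f}{A_2}$ and $\qneg{f}{A_1 \cup A_2} \le \qneg{f}{A_1} + \qneg{f}{A_2}$.

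I do not expect any genuine obstacle here. The only point deserving a moment's attention is the direction of the estimate in the denominator: one must bound $\mu\big(f^l(A_1 \cup A_2)\big)$ from below by each of $\mu\big(f^l(A_1)\big)$ and $\mu\big(f^l(A_2)\big)$ separately, so that the split numerator can be matched term by term with a denominator of its own; replacing this by an upper bound on the union, or dividing through by the sum of the two measures, would push the inequality the wrong way. Note, finally, that the disjointness of $A_1$ and $A_2$ is not used anywhere in this argument and is presumably assumed only because that is the form in which the lemma is later applied.
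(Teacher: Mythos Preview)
Your argument is correct and is exactly the computation the paper has in mind: the paper's own proof reads in its entirety ``This follows easily from the subadditivity of $\mu$,'' and you have simply spelled out that one-line remark, including the correct handling of the denominator via monotonicity. Your observation that disjointness is never used is also accurate.
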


\begin{proof}
This follows easily from the subadditivity of $\mu$.
\end{proof}

\begin{lemma}\label{almostinc}
If $A \in \cB$ is a set of finite positive $\mu$-measure such that
$\mu\big(f^{i}(A)\big) \le \mu\big(f^{i+1}(A)\big)$ for all but finitely
many $i \in \Z$, then $\q{f}{A} < \infty$. The same holds for corresponding
statements for ${\mathcal Q_{+}}$ and ${\mathcal Q_{-}}$.
\end{lemma}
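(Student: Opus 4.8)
The plan is to reduce the statement about $\q{f}{A}$ to a control over two separate regimes: the "large negative" indices, where the ratios $\mu(f^k(A))/\mu(f^l(A))$ for $k<l$ are handled by the eventual monotonicity hypothesis, and the finitely many "exceptional" indices, which can only contribute a bounded multiplicative factor. Concretely, let $N$ be an integer such that $\mu(f^i(A)) \le \mu(f^{i+1}(A))$ for all $i \le -N$ and also for all $i \ge N$ (the hypothesis gives finitely many exceptions, so such an $N$ exists). For $k < l$ with $l \le -N$, telescoping monotonicity gives $\mu(f^k(A)) \le \mu(f^l(A))$ directly when $k \le -N$ as well; if $k$ falls in the exceptional range $-N < k$, then necessarily $-N < k < l \le -N$ is impossible, so in fact $k \le -N$ automatically. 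Hence $\q{f}{A}$ restricted to pairs with $l \le -N$ is at most $1$. Symmetrically, for $k < l$ with $k \ge N$, monotonicity from index $k$ up to index $l$ gives $\mu(f^k(A)) \le \mu(f^l(A))$, so this regime also contributes at most $1$.

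Next I would handle the remaining pairs, namely those $k < l$ with $k < N$ and $l > -N$, i.e.\ with at least one of $k,l$ in a bounded window. Split such a pair by inserting the indices $-N$ and $N$: write
\[
\frac{\mu(f^k(A))}{\mu(f^l(A))}
= \frac{\mu(f^k(A))}{\mu(f^{k'}(A))}\cdot\frac{\mu(f^{k'}(A))}{\mu(f^{l'}(A))}\cdot\frac{\mu(f^{l'}(A))}{\mu(f^l(A))},
\]
where $k' = \max(k,-N)$ and $l' = \min(l,N)$ (and one drops a factor that is trivially $1$ whenever an index already lies in $[-N,N]$). The middle factor ranges over the finite set of pairs of indices in $[-N,N]$, hence is bounded by some constant $C_0 < \infty$ since all the $\mu(f^i(A))$ are finite and positive (this positivity and finiteness is exactly where surjectivity and the bi-Lipschitz property of $f$ are used, as recorded at the start of the subsection). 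For the first factor, $k \le k'$ and, if $k < -N$, then $k' = -N$ and the ratio $\mu(f^k(A))/\mu(f^{-N}(A))$ is bounded by $1$ by the eventual monotonicity on the negative tail; if $k \ge -N$ then $k'=k$ and the factor is $1$. The third factor is symmetric: if $l > N$ then $l'=N$, and $\mu(f^N(A))/\mu(f^l(A)) \le 1$ by monotonicity on the positive tail; otherwise it is $1$. Thus every ratio is at most $C_0$, and combining with the two tail regimes gives $\q{f}{A} \le \max\{1, C_0\} < \infty$.

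The arguments for $\qpos{f}{A}$ and $\qneg{f}{A}$ are the same, only easier: for $\qneg{f}{A}$ one restricts throughout to $l < 0$, so only the negative tail and the window $[-N,0)$ play a role; for $\qpos{f}{A}$ one only needs $l \ge 0$, and the same insertion of cut-points $-N$ and $N$ works verbatim. I do not expect a genuine obstacle here — the only point requiring care is making sure the "finitely many exceptions" are absorbed on \emph{both} tails simultaneously by a single $N$, and that the bridging factor over $[-N,N]$ is finite, which is guaranteed because each $\mu(f^i(A))$ lies in $(0,\infty)$. The lemma is, as the paper suggests for Lemma~\ref{union}, essentially bookkeeping.
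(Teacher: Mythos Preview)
Your argument is correct and follows essentially the same approach as the paper's proof: choose $N$ so that all exceptional indices lie in $[-N,N]$, and observe that any ratio $\mu(f^k(A))/\mu(f^l(A))$ with $k<l$ is bounded by the maximum of $1$ and the finitely many ratios with both indices in $[-N,N]$. The paper compresses this into a single displayed inequality, while you spell out the insertion of the cut-points $\pm N$ and the three-factor decomposition explicitly; the content is identical.
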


\begin{proof}
Let $N \in \N$ be such that
$\mu\big(f^{i}(A)\big) \le \mu\big(f^{i+1}(A)\big)$
whenever $i \in \Z$ and $|i| \ge N$.
Then, $\q{f}{A} \le \max\left(\left\{\frac{\mu(f^k(A))}{\mu(f^l(A))}
: k < l, k,l \in [-N,N] \right\} \cup \{1\}\right)$.
\end{proof}

The next example shows that the converse of Corollary~\ref{Cor1} is false
in general.

\begin{example}\label{InfMeasConvCor2}
Let $X = \N \times \Z$ and $\cB = \cP(X)$. Let $f: X \to X$ be the bijective
bimeasu\-rable map defined by
$$
f(i,j) = (i,j-1).
$$
Let
$$
X_i = \{i\} \times \Z, \  
D_i = \{i\} \times \{1,\ldots,i\}, \ 
P_i = \{i\} \times \{2i+1,\ldots,4i\}, \ 
G_i = \{i\} \times \{4i+1,\ldots\},
$$
for each $i \in \N$, and set
$$
D = \bigcup_{i=1}^{\infty} D_i, \ \ 
P = \bigcup_{i=1}^{\infty} P_i, \ \ 
G = \bigcup_{i=1}^{\infty} G_i.
$$
We define $\mu$ on $\cB$ by
\begin{equation*}
\mu\big(\{(i,j)\}\big) =
\begin{cases}
2^{-j}    & \text{ if } j \le 0 \\
2^j       & \text{ if } 1 \le j \le i\\
2^{2i-j}  & \text{ if } i+1 \le j \le 2i\\
1         & \text{ if } 2i+1  \le j \le 4i\\
2^{-j+4i} & \text{ if } j \ge 4i+1. \\
\end{cases}
\end{equation*}
We note that, for all $i \in \N$, we have that
\begin{equation*}
\mu\big(f\big(\{(i,j)\}\big)\big) =
\begin{cases}
1/2 \cdot \mu\big(\{(i,j)\}\big) & \text{ if } (i,j) \in D_i \\
\mu\big(\{(i,j)\}\big) & \text{ if } (i,j) \in P_i \\
2 \cdot \mu\big(\{(i,j)\}\big) & \text{ otherwise.}  \\
\end{cases}
\end{equation*}
In particular, $f$ is bi-Lipschitz with respect to $\mu$. Moreover,
if $A \cap D = \emptyset$, then $\mu(A) \le \mu\big(f(A)\big)$.
Now, we will establish the desired properties in a series of steps.

\medskip
\noindent {\bf Step 1.} $T_f$ is Li-Yorke chaotic.

\medskip
This follows from applying Theorem~\ref{thm2} to the sets $B_i = \{(i,0)\}$,
$i \in \N$.

\medskip
\noindent {\bf Step 2.} If $A \subset X$ is non-empty and finite, then
$\q{f}{A} < \infty$.

\medskip
Indeed, for every sufficiently large $i$,
$f^{-i}(A) \cap D = \emptyset$ and $f^{i}(A) \cap D = \emptyset$,
and so
$\mu\big(f^{-i}(A)\big) \le \mu\big(f^{-i+1}(A)\big)$ and
$\mu\big(f^{i}(A)\big) \le \mu\big(f^{i+1}(A)\big)$.
Hence, the result follows from Lemma~\ref{almostinc}.

\medskip
\noindent {\bf Step 3.} Fix $i \in \N$. If $A \subset G_i$ is nonempty,
then $\q{f}{A} < \infty.$

\medskip
Note that $\mu\big(f^{j}(A)\big) = 1/2 \cdot \mu\big(f^{j+1}(A)\big)
< \mu\big(f^{j+1}(A)\big)$ for all $j < 0$. Let $k \in \N$ be the
least such that $(i,k) \in A$. Let $A' = A \backslash \{(i,k)\}$. Then,
for $j > i+1 + k$, we have that
\begin{align*}
\mu\big(f^{j}(A)\big)
&= 2^{j-k} + \mu\big(f^j(A') \backslash D_i\big) +\ \mu\big(f^j(A) \cap D_i\big)\\
&< 2^{j-k} + \mu\big(f(f^j(A') \backslash D_i)\big) + 2^{i+1}\\
&< 2^{j-k} + \mu\big(f^{j+1}(A')\big) + 2^{j-k}\\
&= 2^{j+1-k} + \mu\big(f^{j+1}(A')\big) = \mu\big(f^{j+1}(A)\big).
\end{align*}
Now, by Lemma~\ref{almostinc}, we have that $\q{f}{A} < \infty.$

\medskip
\noindent {\bf Step 4.} For every $A \subset X$ with $0 < \mu(A) < \infty$,
we have that $\q{f}{A} < \infty$.

\medskip
As $\mu(A) < \infty$, we have that $A \backslash G$ is finite. By Step~2,
$\q{f}{A \backslash G} < \infty$ if $A \backslash G$ is nonempty. Hence,
in light of Lemma~\ref{union}, it suffices to prove the result in the case
that $A \subset G$. We will further trim $A$. Let $i \in \N$ be the least
integer such that $A \cap X_i \neq \emptyset$. Let $k \in \N$ be the least
integer such that $(i,k) \in A$. Let $E = \cup_{l=i+1}^k (A \cap X_l)$. By
Step~3 and Lemma~\ref{union}, $\q{f}{E} < \infty$ provided $E$ is nonempty.
Hence, we only need to prove the result for $A \backslash E$. Therefore,
we assume that $A \subset G$ is nonempty, $(i,k)$ is as above and, for
$i+1 \le l \le k$, we have that $A \cap X_l = \emptyset$.

For all $j < 0$, we have that $\mu\big(f^{j}(A)\big) < \mu\big(f^{j+1}(A)\big)$
as $A \subset G$. In light of Lemma~\ref{almostinc}, it will suffice to show
that $\mu\big(f^{j}(A)\big) \le \mu\big(f^{j+1}(A)\big)$ for $j > i+1+k$
to complete the proof. Fix $j > i+1+k$. We let
$I = \{m \in \N : f^j(A) \cap D_m \neq \emptyset\}$.
Note that $I$ is finite. If $I$ is empty, then by the definition of $\mu$
we have that $\mu\big(f^{j}(A)\big) \le \mu\big(f^{j+1}(A)\big)$
and we are done. Hence, assume that $I$ is nonempty and let $l = \max I$.
Note that either $l = i$ or $l > k$. In the case that $l > k$, we have that
$j > 3l > 2l +k$. Let $A' = A \backslash \{(i,k)\}$. If $l = i$, then
$$
\mu\big(f^j(A') \cap D\big) \le \mu(D_i) < 2^{i+1} < 2^{j-k}.
$$
If $l > k$, then
$$
\mu\big(f^j(A') \cap D\big) \le \sum_{m \in I} \mu(D_m)
  < \sum_{m \in I} 2^{m+1} < 2^{l+2} < 2^{j-k}.
$$
Now to conclude the proof,
\begin{align*}
\mu\big(f^{j}(A)\big)
&= 2^{j-k} + \mu\big(f^j(A') \backslash D\big) + \mu\big(f^j(A') \cap D\big)\\
&< 2^{j-k} + \mu\big(f(f^j(A') \backslash D)\big) + 2^{j-k}\\
&\le 2^{j+1-k} + \mu\big(f^{j+1}(A')\big) = \mu\big(f^{j+1}(A)\big).
\end{align*}
\end{example}

It was observed in the Introduction that the converse of Corollary~\ref{Cor1}
holds if $\mu$ is finite. However, the next example shows that if we remove
the injectivity hypothesis, then this converse may fail even for $\mu$ finite.

\begin{example}
Let $X = \N \times \N$ and $\cB = \cP(X)$. Let $f: X \to X$ be the surjective
bimeasu\-rable map defined by
\begin{equation*}
f(i,j) = \begin{cases}
         (i,j-1) & \text{ if } j > 1 \\
         (i,1)   & \text{ if } j = 1. \\
         \end{cases}
\end{equation*}
For each $i \in \N$, let
$$
X_i = \{i\} \times \N, \ 
F_i = \{(i,1)\}, \ 
D_i = \{i\} \times \{2, \ldots, i\}, \ 
G_i = X_i \backslash (F_i \cup D_i).
$$
Let
$$
F = \bigcup_{i=1}^{\infty} F_i, \ \
D = \bigcup_{i=1}^{\infty} D_i, \ \
G = \bigcup_{i=1}^{\infty} G_i.
$$
Let $\mu_i$ be the finite measure on $X_i$ so that when the points of $X_i$
are ordered in the usual fashion, their corresponding measures follow the
sequence
$$
1, 2, \ldots, 2^{i-1}, 2^{i}, 2^{i-1}, \ldots, 2, 1, 1/2, 1/4, 1/8,\ldots.
$$
In particular, note that
\begin{equation*}
\mu_i\big(f^{-1}\big(\{(i,j)\}\big)\big) =
  \begin{cases}
  2   \cdot \mu_i\big(\{(i,j)\}\big) & \text{ if } (i,j) \in D_i\\
  1/2 \cdot \mu_i\big(\{(i,j)\}\big) & \text{ if } (i,j) \in G_i.\\
  \end{cases}
\end{equation*}
Let $(\delta_i)_{i \in \N}$ be a sequence of positive numbers so that
$\sum_{i \in \N} \delta_i \mu_i(X_i) < \infty$. Define a finite measure
$\mu$ on $\cB$ by $\mu(A) = \sum_{i \in \N} \delta_i \mu_i(A \cap X_i)$
whenever $A \in \cB$. Now, we will establish the desired properties in a
series of steps.

\medskip
\noindent {\bf Step 1.} $T_f$ is Li-Yorke chaotic.

\medskip
This follows from applying Theorem~\ref{thm2} to the sets $B_i = \{(i,2)\}$,
$i \in \N$.

\medskip
\noindent {\bf Step 2.} If $A \subset D_i$ is non-empty, then
$\qneg{f}{A} < \infty.$

\medskip
This simply follows from the fact that the sequence
$\big(\mu_i\big(f^{-k}(A)\big)\big)_{k \in \N}$ is eventually decreasing
and Lemma~\ref{almostinc}.

\medskip
\noindent {\bf Step 3.} If $A \subset X$ and $A \cap F \neq \emptyset$, then
$\q{f}{A} < \infty$.

\medskip
This follows from the fact that $\mu$ is finite and $F$ is the set of
fixed points of $f$.

\medskip
\noindent {\bf Step 4.} If $A \subset X$ is non-empty, then
$\qpos{f}{A}< \infty$.

\medskip
This simply follows from the fact that $\mu$ is finite and
$\liminf_{l \to \infty} \mu\big(f^l(A)\big) > 0$.

\medskip
\noindent {\bf Step 5.} If $A \subset G$ is non-empty, then
$\qneg{f}{A} < \infty$.

\medskip
This simply follows from the fact that $\mu\big(f^{-1}(A)\big) = 1/2\cdot \mu(A)$
for any set $A \subset G$.

\medskip
\noindent {\bf Step 6.} Suppose $1 \le i < j$. Then, there exists
$L_{i,j} > 1$ such that for all $A_i \subset D_i$, $A_j \subset D_j$,
$A_i \neq \emptyset$ and $k > 0$, we have that
$$
\mu_j\big(f^{-k}(A_j)\big) \le L_{i,j} \cdot \mu_i\big(f^{-k}(A_i)\big).
$$

\medskip
As $A_i \neq \emptyset$ and $A_i \subset D_i$, we have that
$2^{-k} < \mu_i\big(f^{-k}(A_i)\big)$ for all $k > 0$.
Now, let us consider $\mu_j\big(f^{-k}(A_j)\big)$. For $0 < k \le 2j$,
we have that
$$
\mu_j\big(f^{-k}(A_j)\big) \le \mu_j\big(f^{-k}(D_j)\big)
\le (j-1) \cdot 2^{j} < j \cdot 2^{j} \cdot 2^{2j} \cdot 2^{-k}
= j \cdot 2^{3j} \cdot 2^{-k}.
$$
For $k \ge 2j$, we have that
$$
\mu_j\big(f^{-k}(A_j)\big) \le \mu_j\big(f^{-k}(D_j)\big)
\le (j-1) \cdot 2^{-k +2j} < j \cdot 2^{2j} \cdot 2^{-k}.
$$
Hence, $\mu_j\big(f^{-k}(A_j)\big) < j \cdot 2^{3j} \cdot 2^{-k}$ for all
$k > 0$. Letting $L_{i,j} = j \cdot 2^{3j}$, the result follows.

\medskip
\noindent {\bf Step 7.} Suppose that $(\delta_i)_{i \in \N}$ satisfies the
following additional property:
$\forall j \ge 2, \delta_j < 2^{-j} \cdot \max
\left\{\frac{\delta_i}{L_{i,j}} : 1 \le i \le j-1\right\}.$ Then,
$\q{f}{A} < \infty$ whenever $A \subset X$ and $\mu(A) > 0$.

\medskip
Let $A \subset X$ with $\mu(A) > 0$. By Steps 3, 4 and 5, we have that
$\q{f}{A \cap F}$ and $\q{f}{A \cap G}$ are finite, provided $A \cap F$
and $A \cap G$ are nonempty.  In light of Lemma~\ref{union}, we only need
to show that $\q{f}{A \cap D}$ is finite. Hence, let us assume $A \subset D$.
By Step~4, we only need to show that $\qneg{f}{A} < \infty$.
Let $A_i  = A \cap D_i $ for all $i \ge 1$. Fix $i$ to be the least positive
integer for which $A_i \neq \emptyset$. Let $0 < l < k$. Then, by Step~6,
\begin{align*}
\frac{\mu\big(f^{-k}(A)\big)}{\mu\big(f^{-l}(A)\big)}
&=   \frac{\delta_i \mu_i\big(f^{-k}(A_i)\big) + \sum_{j=i+1}^{\infty}
           \delta_j \mu_j\big(f^{-k}(A_j)\big)}
          {\delta_i \mu_i\big(f^{-l}(A_i)\big) + \sum_{j=i+1}^{\infty}
           \delta_j \mu_j\big(f^{-l}(A_j)\big)} \\
&\le \frac{\delta_i \mu_i\big(f^{-k}(A_i)\big) + \sum_{j=i+1}^{\infty}
           \delta_j \cdot L_{i,j} \cdot \mu_i\big(f^{-k}(A_i)\big)}
          {\delta_i \mu_i\big(f^{-l}(A_i)\big)}\\
&\le \frac{\delta_i \mu_i\big(f^{-k}(A_i)\big) + \sum_{j=i+1}^{\infty}
           2^{-j} \cdot \delta_i \cdot \mu_i\big(f^{-k}(A_i)\big)}
          {\delta_i \mu_i\big(f^{-l}(A_i)\big)}\\
&\le 2 \cdot \frac{\mu_i\big(f^{-k}(A_i)\big)}{\mu_i\big(f^{-l}(A_i)\big)}\cdot
\end{align*}
As $\qneg{f}{A_i} < \infty$ (Step~2), we have that $\qneg{f}{A} <\infty$,
completing the proof.
\end{example}


\subsection*{The hypothesis that the measure is finite in Theorem~\ref{thm1}}
\label{Se1}
Our goal in this subsection is to show that the hypothesis that $\mu$ is
finite is essential in Theorem \ref{thm1} and Corollary \ref{cthm1}. In all
examples in this subsection we will consider $X = \Z$, $\cB = \cP(\Z)$ and
$f : i \in \Z \mapsto i+1 \in \Z$. Note that $f$ is a bimeasurable bijection.
The measure $\mu$ will be given by its values at the points of $\Z$:
$$
\mu_i = \mu(\{i\}) \ \ \ \ \ (i \in \Z).
$$
In all examples $\mu$ will be $\sigma$-finite and $f$ will be bi-Lipschitz
with respect to $\mu$.

\begin{example}\label{notfinite}
A composition operator $T_f$ satisfying only $(LY2)$ and $(LY3)$ out of the
seven conditions in Theorem~\ref{thm1}: Let
$$
\mu_i =
\begin{cases}
\dfrac{1}{2^{-i}} & \quad\textrm{if}\quad i\le -1\\[0.1in] \,\,\,1& \quad\textrm{if}\quad i\ge 0
\end{cases}.
$$
Since $\mu(B) \leq \mu\big(f(B)\big) \leq 2\mu(B)$ for every $B \in \cB$,
$f$ is bi-Lipschitz with respect to $\mu$. Moreover, for any finite set
$B \subset \Z$, $\lim_{k \to \infty} \mu\big(f^{-k}(B)\big) = 0$, proving
that $({LY3})$ is true. By Remark~\ref{rthm1}, $(LY2)$ is also true.
Now, let $\varphi\in L^p(\Z,\cP(\Z),\mu)$ be arbitrary. Then
$$
\sum_{i=-\infty}^{-1} \left|\varphi(i)\right|^p \frac{1}{2^{-i}}
  + \sum_{i=0}^\infty \left|\varphi(i)\right|^p =\Vert\varphi\Vert^p < \infty.
$$
For each $k \geq 1$, let $j_k = k/2$ if $k$ is even and $j_k = (k-1)/2$
if $k$ is odd. Then
\begin{eqnarray*}
\|T_f^k\varphi\|^p
  &=& \sum_{i \in \Z} \left|\varphi(i+k)\right|^p \mu\left(\{i\}\right)\\
  &=& \sum_{i=-\infty}^{-1} \left|\varphi(i+k)\right\vert^p \frac{1}{2^{-i}}
      + \sum_{i=0}^\infty \left|\varphi(i+k)\right|^p\\
  &=& \frac{1}{2^k}\sum_{i=-\infty}^{k-1} \left|\varphi(i)\right\vert^p
      \frac{1}{2^{-i}} + \sum_{i=k}^\infty \left|\varphi(i)\right|^p\\
  &\leq& \frac{1}{2^k}\sum_{i=-\infty}^{-1} \left|\varphi(i)\right\vert^p
      \frac{1}{2^{-i}} + \frac{1}{2^{k-j_k+1}} \sum_{i=0}^{j_k-1}
      \left|\varphi(i)\right\vert^p + \sum_{i=j_k}^\infty
      \left|\varphi(i)\right|^p\\
  &\to& 0 \ \text{ as } k \to \infty.
\end{eqnarray*}
Thus, all orbits under $T_f$ converge to zero. In particular, $(LY1)$ is
false. By Remark~\ref{rthm1}, $(LY6)$ and $(LY7)$ are also false.
Moreover, by the definition of $\mu$, for any non-empty set $B \subset \Z$,
$\liminf_{k\to\infty} \mu\big(f^k(B)\big) \geq 1$, proving that
$(LY4)$ and $(LY5)$ are also false.
\end{example}

\begin{example}\label{inli}
A composition operator $T_f$ such that the equivalent properties
$(LY6)$ and $(LY7)$ hold, but $(LY4)$ does not (in particular,
$T_f$ is Li-Yorke chaotic but $(T_f)^{-1} = T_{f^{-1}}$ is not):
Consider
$$
\ldots \mu_{-3} \mu_{-2}, \mu_{-1} \,\Big|\, \mu_{0} , \mu_{1} ,\mu_{2}\ldots
= \ldots 1, \frac12, \frac{1}{2^2}, \frac12, 1, 1, \frac12, 1 \,\Big| \,
  1, 1, 1 \ldots,
$$
where the bar indicates that $\mu_{-2} = \frac12$, $\mu_{-1} = 1$,
$\mu_0 = 1$, $\mu_1 = 1$, $\mu_2 = 1$, and so on,
and in the left hand side we have successive blocks of the form
$$
1,\frac12,\frac{1}{2^2},\ldots,\frac{1}{2^{k-1}},\frac{1}{2^k},
\frac{1}{2^{k-1}},\ldots,\frac{1}{2^2},\frac12,1.
$$
Since $\frac12\mu(B) \le \mu\big(f(B)\big) \le 2\mu(B)$ for every $B \in \cB$,
$f$ is bi-Lipschitz with respect to $\mu$. In particular, $T_f$ and
$(T_f)^{-1} = T_{f^{-1}}$ are continuous linear operators acting on
$L^p\big(\Z,\cP(\Z),\mu\big)$. Since $\mu(\{0\}) = 1 > 0$,
$ \liminf_{k\to\infty} \mu\big(f^{-k}(\{0\})\big) = 0$ and
$\limsup_{k\to\infty} \mu\big(f^{-k}(\{0\})\big) > 0$,
$(LY6)$ is true. By Remark \ref{rthm1}, $T_f$ is Li-Yorke chaotic.
As for $g = f^{-1}$, note that for any non-empty set $B$, we have that
$$
\liminf_{k\to\infty} \mu\big(g^{-k}(B)\big) =
\liminf_{k\to\infty} \mu\big(f^{k}(B)\big) \geq 1.
$$
Hence, $(LY3)$ is false for $g$. By Remark \ref{rthm1}, $(T_f)^{-1}$ is not
Li-Yorke chaotic.
\end{example}

\begin{example}
A composition operator $T_f$ such that $(LY1)$ holds, but $(LY4)$ and
$(LY7)$ do not: Consider
$$
\ldots \mu_{-3} \mu_{-2}, \mu_{-1} \,\Big|\, \mu_{0} , \mu_{1} ,\mu_{2}\ldots
= \ldots \frac{1}{2^3}, \frac{1}{2^2}, \frac{1}{2} \, \Big| \,
   1, 2, 1, 1, 2, 2^2, 2, 1 \ldots,
$$
where in the right hand side we have successive blocks of the form
$$
1,2,2^2,\ldots,2^{k-1},2^k,2^{k-1},\ldots,2^2,2,1.
$$
Since $\frac12\mu(B) \le \mu\big(f(B)\big) \le 2\mu(B)$ for every $B \in \cB$,
$f$ is bi-Lipschitz with respect to $\mu$. By Corollary~\ref{cthm2},
$T_f$ satisfies $(LY1)$. If a characteristic function $\chi_B$ lies in
$L^p(\Z,\cP(\Z),\mu)$, then $B \cap \N$ is finite, and so $T_f^n(\rchi_B) \to 0$
as $n \to \infty$. This shows that $(LY7)$ is false. It is also clear
that $(LY4)$ fails.
\end{example}

\begin{example}
A composition operator $T_f$ satisfying only $(LY4)$ out of the seven
conditions in Theorem~\ref{thm1}: It is enough to define
$$
\ldots \mu_{-3} \mu_{-2}, \mu_{-1} \,\Big|\, \mu_{0} , \mu_{1} ,\mu_{2}\ldots
= \ldots 1,1,1 \, \Big| \, \frac{1}{2},\frac{1}{2^2},\frac{1}{2^3} \ldots.
$$
\end{example}


\subsection*{The injectivity hypothesis in Theorem~\ref{thm1}}\label{Se2}

The hypothesis that $f$ is injective in Theorem \ref{thm1} cannot be removed.
In fact, as the next example shows, fail of the injectivity of $f$ at only
$2$ points may prevent $T_f$ from being Li-Yorke chaotic.

\begin{example}\label{ninjective}
Let $X = \{0\} \cup \left\{\frac1i : i \ge 1\right\}$ and $\cB = \cP(X)$.
The finite measure $\mu$ is defined by its values at the elements of $X$
as follows:
$$
\mu\left(\{0\}\right) = 0 \ \ \text{ and } \ \
\mu\left(\left\{\frac{1}{i}\right\}\right) =\; \frac{1}{2^i} \ \
\text{ for } i \geq 1.
$$
The map $f : X \to X$ is given by
$$
f(0) = 0,\quad
f\left(\frac{1}{i}\right) = \dfrac{1}{i-1} \ \ \text{ for } i \geq 2, \quad
f(1) = 1.
$$
Clearly $f$ is surjective, but it is not injective since
$f\left(\frac12\right) = f(1)$. We claim that
$$
\frac12 \mu(B) \leq \mu\big(f(B)\big) \leq 2 \mu(B) \ \ \text{ for every }
B \in \cB.
$$
Indeed, since the second inequality is clear, let us prove the first one. If
$\big\{\frac{1}{2},1\big\} \not\subset B$, then $f$ is injective on $B$ and
$\mu\big(f(B)\big) \ge \mu(B)$. Assume $\left\{\frac12,1\right\} \subset B$
and write $B$ as the union $B = \big\{\frac12,1\big\} \cup B'$,
where $\left\{\frac12,1\right\} \cap B' = \emptyset$. Since
$\mu\big(f(B')\big) \ge \mu(B')$, we obtain
$$
\frac{\mu\big(f(B)\big)}{\mu(B)}
  = \frac{\frac12+\mu(B')}{\frac14+\frac12+\mu(B')}
\ge \frac{\frac12}{\frac14+\frac12+\frac14}
  = \frac12\cdot
$$
Thus, $f$ is bi-Lipschitz with respect to $\mu$.
If $B = \left\{\frac12\right\}$, then $\mu(B) = \frac14 > 0$ and
$$
\lim_{k \to \infty} \mu\big(f^{-k}(B)\big)
= \lim_{k \to \infty}\frac{1}{2^{k+2}} = 0.
$$
Hence, $(LY3)$ is true and, by Remark \ref{rthm1}, so is $(LY2)$.
Now, let $\varphi\in L^p(X,\cB,\mu)$ be arbitrary. Note that
$$
\left(T_f^k \varphi\right)\left(\frac1i\right)
= \varphi\left(f^k\left(\frac1i\right)\right)
= \begin{cases} \varphi(1) & \textrm{if} \quad 1\le i\le k \\
  \varphi\left(\frac{1}{i-k}\right) & \textrm{if} \quad i\ge  k+1
\end{cases}.
$$
Hence,
\begin{eqnarray*}
\|T_f^k\varphi\|^p
&=& \sum_{i=1}^\infty \left|\left(T_f^k\varphi\right)\left(\frac1i\right)
    \right|^p \mu\left(\left\{\frac1i\right\}\right)\\
&=& |\varphi(1)|^p \sum_{i=1}^{k} \frac{1}{2^i} + \sum_{i=k+1}^\infty
    \left|\varphi\left(\frac{1}{i-k}\right)\right|^p \frac{1}{2^i}\cdot
\end{eqnarray*}
Therefore, if $\varphi(1)\neq 0$, then $\|T_f^k\varphi\|^p \ge \frac12
\left|\varphi(1)\right|^p > 0$ for all $k \ge 1$, implying that $\varphi$
is not a semi-irregular vector for $T_f$. Suppose that $\varphi(1) = 0$.
Then
$$
\|T_f^k\varphi\|^p = \frac{1}{2^k} \sum_{i=k+1}^\infty
  \left|\varphi\left(\frac{1}{i-k}\right)\right|^p \frac{1}{2^{i-k}}
  = \frac{1}{2^k} \Vert \varphi\Vert^p \to 0 \text{ as } k \to \infty,
$$
showing that $\varphi$ is not a semi-irregular vector of $T_f$. Therefore,
$(LY1)$ is false. By Remark~\ref{rthm1}, $(LY6)$ and $(LY7)$ are also false.
Finally, note that any set $B \subset X$ of positive measure contains a point
of the form $\frac1i$, for some $i \ge 1$. As $f^k\left(\frac1i\right) = 1$
for every $k$ big enough, we conclude that $\mu\left(f^k(B)\right) \ge
\frac12$ for any such $k$, proving that $(LY4)$ (hence $(LY5)$) is false.
\end{example}


\subsection*{A generically Li-Yorke chaotic composition operator that is not
topologically transitive}\label{Se4}

It was obtained in \cite[Theorem~3.13]{Pra} and \cite[Theorem~37]{BBMP2}
examples of unilateral weighted forward shifts
$$
T : (x_1,x_2,x_3,\ldots) \in \ell^2 \mapsto
    (0,w_1x_1,w_2x_2,w_3x_3,\ldots) \in \ell^2
$$
that are generically Li-Yorke chaotic. Moreover, the weights $w_j$ satisfy
$1/4 \leq w_j \leq 2$ for all $j \in \N$. Clearly, such an operator cannot
be topologically transitive. Let us see that such an operator $T$ can be
regarded as a composition operator $T_f$ on $L^2(\N,\cP(\N),\mu)$ for
suitable $\mu$ and $f$. Indeed, the measure $\mu$ is defined by
$$
\mu(\{1\}) = \infty, \ \ \mu(\{2\}) = 1 \ \ \text{ and } \ \
\mu(\{i\}) = (w_1 \cdot\ldots\cdot w_{i-2})^2 \ \text{ for } i \geq 3,
$$
and the bimeasurable map $f : \N \to \N$ is given by
$$
f(1) = 1 \ \ \text{ and } \ \ f(i) = i-1 \ \text{ for } i \geq 2.
$$
An easy computation shows that (\ref{condition}) holds with $c = 1/4$. Hence,
the composition operator $T_f$ is well-defined on $L^2(\N,\cP(\N),\mu)$.
Now, let
$$
\phi : (x_n)_{n \in \N} \in L^2(\N,\cP(\N),\mu) \mapsto
       (x_2,w_1x_3,w_1w_2x_4,w_1w_2w_3x_5,\ldots) \in \ell^2.
$$
Then, $\phi$ is an isometric isomorphism from $L^2(\N,\cP(\N),\mu)$ onto
$\ell^2$ and $T \circ \phi = \phi \circ T_f$.
This shows that $T$ is topologically conjugate to $T_f$. Thus, from a
dynamical systems point of view, we can regard $T$ as being $T_f$.


\section*{Acknowledgements}

 The first, the second and the third authors were partially supported by
grants {\#}2017/22588-0, {\#}2017/19360-8 and {\#}2018/06916-0,
S\~ao Paulo Research Foundation (FAPESP), respectively.
The first and the third authors were also partially supported by CNPq.


\end{document}